\title{Annihilating-Ideal Graph of $C(X)$}
\author[M. Badie]{Mehdi Badie}
\address{Department of Basic Sciences, Jundi-Shapur University of Technology, Dezful, Iran}
\email{badie@jsu.ac.ir}
\keywords{Rings of continuous functions, Annihilating-ideal graph, Radius, Girth, Dominating number, Clique number, Choroamtic number, Cellularity}
\subjclass[2010]{54C40,5C25}
\theoremstyle{plain}
\newtheorem{Thm}{Theorem}[section]
\newtheorem{Lem}[Thm]{Lemma}
\newtheorem{Def}[Thm]{Definition}
\newtheorem{Pro}[Thm]{Proposition}
\newtheorem{Cor}[Thm]{Corollary}
\newtheorem{Exa}[Thm]{Example}
\renewcommand{\O}{\mathbf{O}}
\newcommand{\I}{\mathbf{I}}
\newcommand{\AG}{\mathbb{AG}}
\newcommand{\DGX}{\mathbb{DG}(X)}
\newcommand{\AC}{\A{X}}
\newcommand{\AGC}{\mathbb{AG}(X)}
\newcommand{\A}[1]{\mathbb{A}(#1)^*}
\newcommand{\ff}{if and only if }
\newcommand{\An}{\mathrm{Ann}}
\newcommand{\Ze}{\mathrm{Z}}
\newcommand{\Co}{\mathrm{Coz}}
\newcommand{\Ec}{\mathrm{ecc}}
\newcommand{\Di}{\mathrm{diam}}
\newcommand{\Ra}{\mathrm{Rad}}
\newcommand{\Gi}{\mathrm{girth}}
\newcommand{\Cli}{\mathrm{clique}}
\newcommand{\gi}{\mathrm{gi}}
\newcommand{\dt}{\mathrm{dt}}
\newcommand{\Ge}[1]{\big< #1 \big>}
\newcommand{\Q}{\mathbb{Q}}
\newcommand{\R}{\mathbb{R}}
\begin{document}

\begin{abstract}
In this article we study the annihilating-ideal graph of the ring $C(X)$. We have tried to associate the graph properties of $\AGC$, the ring properties of $C(X)$ and the topological properties of $X$.  We have shown that $ X $ has an isolated point \ff $ \R $ is a direct summand of $ C(X) $ \ff $ \AGC $ is not triangulated. Radius, girth, dominating number and clique number of the $\AGC$ are investigated. We have proved that $ c(X) \leqslant \dt(\AGC) \leqslant w(X) $ and $ \Cli \AGC = \chi \AGC = c(X) $.
\end{abstract}

\maketitle

\section{Introduction}

Let $ G = \big< V(G) , E(G) \big> $ be an undirected graph. A vertex which adjacent to just one vertex is called a \emph{leaf vertex}.  The degree of a vertex of $ G $ is the number of edges incident to the vertex. If $ G $ has a vertex which adjacent to all another vertices, then $ G $ is called a \emph{star graph}.  For each vertices $ u $ and $ v $ in $ V(G) $, the length of the shortest path between $ u $ and $ v $, denoted by $ d(u,v) $, is called the \emph{distance} between $ u $ and $ v $. The \emph{diameter} of $ G $ is defined $ \sup \{ d(u,v) : u,v \in V(G) \} $, the diameter of $ G $ is denoted by $ \Di(G) $. The \emph{eccentricity} of a vertex $ u $ of $ G $ is denoted by $ \Ec(u) $ and is defined to be maximum of $ \{ d(u,v) : u \in G \} $. The minimum of $ \{ \Ec(u) : u \in G \} $, denoted by $ \Ra(G) $, is called the \emph{radius} of $G$. For every $ u,v \in V(G) $, we denote the length of the shortest cycle containing $ u $ and $ v $ by $ \gi(u,v) $ and the minimum length of cycles in $ G $, is denoted by $ \Gi(G) $ and is called the \emph{girth} of graph, so $ \Gi(G) = \min \{ \gi(u,v) : u , v \in V(G) \} $. We say $ G $ is \emph{triangulated} (\emph{hypertriangulated}) if each vertex (edge) of $ G $ is vertex (edge) of some triangle. A subset $ D $ of $ V(G) $ is called \emph{dominating set} if for each $ u \in V(G) \setminus D $, there is some $ v \in D $, such that $ v $ is adjacent to $ u $. The \emph{dominating number} of $ G $, denoted by $ \dt(G) $, is the smallest cardinal number of dominating sets of $ G $. We say two vertices $ u $ and $ v $ are \emph{orthogonal} and denote by $ u \perp v $, if $ u $ and $ v $ are adjacent and there are no any vertex which adjacent to both vertices $ u $ and $ v $. If for every $ u \in V(G) $, there is some $ v \in V(G) $ such that $ u \perp v $, then $ G $ is called \emph{complemented}. A complete subgraph of $ G $ is called a \emph{clique} of $ G $. The supremum of the cardinality of cliques of $ G $, denoted by $ \Cli(G) $, is said the \emph{clique number} of $ G $. The \emph{chromatic number} of $ G $, denoted by $ \chi(G) $, is the minimum cardinal number of colors needed to color vertices of $ G $ so that no two vertices have that same color. Clearly, $ \Cli(G) \leqslant \chi(G) $. 

Throughout the paper $ R $ is denoted the commutative ring with unity. For each ideal $I$ of $ R $ and each element $ a $ of $ R $, we denote the ideal $ \{ x \in R : ax \in I\}$ by $ ( I : a ) $. When $ I = \big< 0 \big>  $ we write $ \An(a) $ instead of $\left(\big<0\big>:a\right)$ and call this the \emph{annihilator} of $a$. If for each subset $ S $ of $ R $ there is some $ a \in R $ such that $ \An(S) = \An(a) $, then we say $ R $ is satisfy \emph{infinite annihilating condition} ($ R $ is an i.a.c ring). We denote the family of all non-zero ideal with non-zero annihilating by $ \A{R} $. $ \AG(R) $ is a graph with vertices $ \A{R} $ and two distinct vertices $ I $ and $ J $ are adjacent, if $ IJ = \{ 0 \} $.  

In this paper $ C(X) $ is denote the set of all real-valued continuous function on a Tychonoff space $ X $. The \emph{weight} of $X$, denoted by $w(X)$, is the infimum of the cardinalities of bases of $X$. The \emph{cellularity} of $X$, denoted by $c(X)$, is defined 
\[ \sup\{|\mathcal{U}|:{\mathcal U} \text{ is a family of mutually disjoint nonempty open subsets of  } X \}\]
For any $ f \in C(X)$, we denote $f^{-1} \{ 0 \}$   and $ X \setminus f^{-1}  \{0\} $ by $Z(f )$ and $\Co(f )$,  respectively. Every set of the form  $Z(f)$($\Co(f)$) is called \emph{zeroset} (\emph{cozeroset}). An ideal $ I $ of $ C(X) $ is called \emph{fixed} (\emph{free}) if $ \bigcap_{f \in I} Z(f) \neq \emptyset $ ($ \bigcap_{f \in I} Z(f) = \emptyset $). Suppose that $ A \subseteq X $, We denote $ \{ f \in C(X) : A \subseteq Z(f) \} $ and $ \{ f \in C(X) : A \subseteq Z(f)^\circ \} $ by $ M_A $ and $ O_A $, respectively. When $ A = \{p\} $ we write $ M_p $ and $ O_p $ instead of $ M_{\{p\}} $ and $ O_{\{p\}} $, respectively, it is clear that $ M_A = \bigcap_{p \in A} M_p $ and $ O_A = \bigcap_{p \in A} O_p $. By \cite[Theroem 7.3(Gelfand-Kolmogoroff)]{gillman1960rings}, $ \{ M^p : p \in \beta X \} $ is the family of all maximal ideal of $C(X)$. An ideal $ I $ of $ C(X) $ is called $ z $-ideal, if $ Z(f) = Z(g) $ and $ f \in I $, then $ g \in I $. Clearly, for every ideal $ I $ of $ C(X) $, $ Z^{-1}(Z(I)) $ is a the smallest $ z $-ideal containing $ I $. For more details we refer the reader to  \cite{bondy1976m} \cite{gillman1960rings}, \cite{atiyah1969introduction} and \cite{stephen1970general}.

The studding of some graphs on $ C(X) $ is an interesting. In these investigations  were tried to associate the ring properties of $C(X)$, the graph properties of graphs on $C(X)$ and the topological properties of $X$. In \cite{azarpanah2005zero,amini2011graph,badie2016comaximal} the zero-divisor graph, the comaximal ideal graph of $C(X)$ and comaximal graph of $ C(X) $ were studied. In \cite{behboodi2011annihilating,behboodi2011annihilating2}, the studying annihilating-ideal graph of commutative rings were started. On later, this investigation was continued in many papers, for instance see \cite{aalipour2012coloring,nikandish2014dominating,nikandish2015domination,chelvam2015connectivity,aliniaeifard2015annihilating,guo2017rings,selvakumar2018crosscap}. 

In this article we study the annihilating-ideal graph of $ C(X) $. We abbreviate $ \A{C(X)} $ and $ \AG(C(X)) $ by $ \AC $ and $ \AGC $, respectively. If $ | X  | = 1 $, then $ \AC = \emptyset $, so we assume $ | X | > 1 $, throughout the paper.

In the reminder of this section we give some propositions which were concluded immediately from the native algebraic properties of $ C(X) $ and \cite{behboodi2011annihilating,behboodi2011annihilating2,azarpanah2005zero}. In Section 2, we define maps $ \O $ from the family of all subsets of $ C(X) $ onto the family of all open subsets of $ X $ and $ \I $ from  the family of all subset of $ X $ into the family of all ideals of $ C(X) $. We study these maps and use of these notions to study the graph. We show that $ I $ is adjacent of $ J $ \ff $ \O(I) \cap \O(J) = \emptyset $, the non zero ideal $ I $ is zero divisor \ff $ \overline{\O(I)} \neq X $, $I(U) \in \AC $ \ff $ \overline{\O(I)}^{^\circ} \neq \emptyset $. In Section 3, we investigate the radius of the graph and we show that $ \AGC $ is star \ff $ |X| = 2 $. Section 4, is devoted to the girth of the graph. In this section we show that if $ |X| > 2 $, then $ \Gi \AGC = 3 $. Also, we show that an ideal in $ \AC $ is a leaf vertex \ff $ X \setminus \overline{\O(I)} = X $.  The studying the dominating number of the graph is the subject of Section 5. In this section we show that the clique number and chromatic number of $ \AGC $, and cellularity of $ X $ are equal.  

\begin{Pro}
	The following statements are equivalent.
	\begin{itemize}
		\item[(a)] $ |X| = 2 $.
		\item[(b)] $ \Di(\AGC) = 1 $.
		\item[(c)] $ \Cli \AGC = 2 $.
		\item[(d)] $ \AGC $ is a bipartite graph by two nonempty parts.
		\item[(e)] $ \AGC $ is a complete bipartite graph by two nonempty parts.
	\end{itemize}
\end{Pro}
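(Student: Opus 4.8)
The plan is to treat $(a)$ as a hub: I will show that $(a)$ implies each of $(b)$--$(e)$, and conversely that the failure of $(a)$, i.e. $|X|\geq 3$ (recall $|X|>1$ throughout), forces each of $(b)$--$(e)$ to fail. For the direction $(a)\Rightarrow(b),(c),(d),(e)$ I would simply compute $\AGC$ outright. If $|X|=2$ then $X$ is a two-point Hausdorff space, hence discrete, so $C(X)\cong\R\times\R$; the only nonzero proper ideals are the two maximal ideals $M_p$ and $M_q$, each of which annihilates the other, while $\An(C(X))=\langle 0\rangle$. Hence $\AC=\{M_p,M_q\}$ and $\AGC$ is the single edge $K_2$. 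A single edge visibly has diameter $1$, clique number $2$, and is the complete bipartite graph $K_{1,1}$ on the two nonempty parts $\{M_p\}$ and $\{M_q\}$, which gives $(b)$, $(c)$, $(d)$ and $(e)$ at once.

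For the reverse implications I would argue the contrapositive uniformly: assume $|X|\geq 3$ and produce inside $\AGC$ both a triangle and a pair of non-adjacent vertices. Choosing three distinct points $p_1,p_2,p_3$, Hausdorffness yields pairwise disjoint open sets $U_1,U_2,U_3$ with $p_i\in U_i$, and complete regularity yields $f_i\in C(X)$ with $f_i(p_i)=1$ and $f_i\equiv 0$ off $U_i$, so $\Co(f_i)\subseteq U_i$. Since the $U_i$ are pairwise disjoint we get $f_if_j=0$ for $i\neq j$, and each $Z(f_i)$ contains some $U_j$ ($j\neq i$), hence has nonempty interior, so each $f_i$ is a zero-divisor and $\langle f_i\rangle\in\AC$. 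Evaluating at the points $p_i$ shows the three ideals are pairwise distinct, so $\langle f_1\rangle,\langle f_2\rangle,\langle f_3\rangle$ form a triangle.

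The triangle settles three of the four reverse implications simultaneously: a $3$-cycle is an odd cycle, so $\AGC$ is not bipartite, killing $(d)$ and $(e)$; and it is a clique of size $3$, so $\Cli\AGC\geq 3$, killing $(c)$. For $(b)$ I would instead exhibit two distinct vertices that are \emph{not} adjacent, for instance $\langle f_1\rangle$ and $\langle f_1+f_2\rangle$: these are distinct (compare values at $p_2$), both are zero-divisors (note $Z(f_1+f_2)\supseteq U_3$), yet $f_1(f_1+f_2)=f_1^2\neq 0$ because $C(X)$ is reduced, so their product is nonzero, $\AGC$ is not complete, and $\Di(\AGC)\geq 2$, killing $(b)$. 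I expect the only real work to be the triangle construction: one must combine separation of three points by pairwise disjoint opens (Hausdorff) with Urysohn-type functions (complete regularity) and then check carefully that the resulting principal ideals are genuine, pairwise distinct vertices of $\AC$. One could alternatively invoke the girth computation of Section~4 for the triangle, but since the statement lives in the introduction I prefer the self-contained construction above.
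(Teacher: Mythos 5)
Your argument is correct, but it takes a genuinely different route from the paper. The paper disposes of this proposition in one line by citing general ring-theoretic characterizations from the annihilating-ideal-graph literature (Theorem 1.4 of Behboodi--Rakeei and Corollary 2.1 of Nikandish et al.), which classify when $\AG(R)$ is complete bipartite, a star, has diameter one, etc., and then lets the structure of $C(X)$ do the rest. You instead give a self-contained proof: for $|X|=2$ you compute $C(X)\cong\R\times\R$ and read everything off the single-edge graph $K_2$, and for $|X|\geqslant 3$ you use Hausdorffness plus complete regularity to build $f_1,f_2,f_3$ with pairwise disjoint cozero sets, yielding a triangle $\Ge{f_1}-\Ge{f_2}-\Ge{f_3}$ (which defeats (c), (d), (e)) and the non-adjacent pair $\Ge{f_1}$, $\Ge{f_1+f_2}$ (which defeats (b)); all the verifications there (that the ideals lie in $\AC$, are pairwise distinct, and that $f_1(f_1+f_2)=f_1^2\neq 0$ since $C(X)$ is reduced) check out. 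Your approach buys independence from the cited literature and makes visible the topological mechanism (disjoint cozero sets $\leftrightarrow$ adjacency) that the rest of the paper formalizes via the $\O$ and $\I$ maps; the paper's approach is shorter and situates the result inside the general theory of $\AG(R)$ for commutative rings. Your triangle construction essentially anticipates the proof of Theorem \ref{girth}, so the only cost is some duplication with Section 4.
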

\begin{proof}
	It is concludes from \cite[Theorem 1.4]{behboodi2011annihilating2} and \cite[Corollary 2.1]{nikandish2015domination}.
\end{proof}

\begin{Pro}
	The following statements hold.
	\begin{itemize}
		\item[(a)] $ X $ has at least 3 points \ff $ \Di(\AGC) = 3 $.
		\item[(b)] $ \chi(\AGC) = \Cli(\AGC) $.
	\end{itemize}
\label{diameter and clique=chi}
\end{Pro}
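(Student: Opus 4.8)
The plan is to prove the two parts independently, leaning on the standing hypothesis $|X|>1$, the preceding Proposition, the universal bound $\Di(\AGC)\le 3$ for annihilating-ideal graphs \cite{behboodi2011annihilating2}, and---for (b)---the fact that $C(X)$ is reduced.

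For (a), I would first observe that the preceding Proposition gives $|X|=2\iff\Di(\AGC)=1$, so the converse of (a) is free: if $\Di(\AGC)=3$ then $|X|\ne 2$, and since $|X|>1$ this forces $|X|\ge 3$. Thus the whole content of (a) is the implication $|X|\ge 3\Rightarrow\Di(\AGC)=3$, and because the diameter never exceeds $3$ it suffices to exhibit two vertices at distance exactly $3$.

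To build them I would use that $X$, being Tychonoff, is regular: separating three distinct points, I can find nonempty open $W_1,W_2,W_3$ with pairwise disjoint closures. For any open $U$ the ideal $\I(U)=\{f\in C(X):\Co(f)\subseteq U\}$ satisfies $\O(\I(U))=U$ (cozero-sets form a base of $X$), is nonzero when $U\ne\emptyset$, and belongs to $\AC$ exactly when $U$ is not dense, by the stated criterion that a nonzero ideal $I$ is a zero-divisor \ff $\overline{\O(I)}\ne X$. I then set $I=\I(X\setminus\overline{W_1})$ and $J=\I(X\setminus\overline{W_2})$. Each support $\O(I)=X\setminus\overline{W_1}$ and $\O(J)=X\setminus\overline{W_2}$ is non-dense, being disjoint from the nonempty open set $W_1$, resp.\ $W_2$, so $I,J\in\AC$. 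Since $\O(I)\cap\O(J)=X\setminus(\overline{W_1}\cup\overline{W_2})\supseteq W_3\ne\emptyset$, the adjacency criterion ($I$ adjacent to $J$ \ff $\O(I)\cap\O(J)=\emptyset$) shows that $I$ and $J$ are not adjacent. Finally $\O(I)\cup\O(J)=X\setminus(\overline{W_1}\cap\overline{W_2})=X$ because the two closures are disjoint; as every vertex $K$ has $\O(K)\ne\emptyset$, no $K$ can be adjacent to both $I$ and $J$, so they have no common neighbour. Hence $d(I,J)\ge 3$, and with the bound $\le 3$ we get $d(I,J)=3$, i.e.\ $\Di(\AGC)=3$. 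The one delicate point is this last step: forcing $I,J$ to have no common neighbour requires $\O(I)\cup\O(J)$ to be dense, and it is the choice of the $W_i$ with pairwise disjoint closures (where regularity of $X$ is used) that makes this union equal to all of $X$.

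For (b), the key observation is that $C(X)$ is reduced, since $f^2=0$ forces $f=0$. The inequality $\Cli(\AGC)\le\chi(\AGC)$ holds for every graph, so only $\chi(\AGC)\le\Cli(\AGC)$ must be shown. A clique of $\AGC$ is a family of ideals with pairwise disjoint supports, i.e.\ a family of pairwise disjoint nonempty open sets, so $\Cli(\AGC)$ is nothing but the cellularity $c(X)$---the picture Section~5 develops. The remaining inequality is exactly the assertion that the annihilating-ideal graph of a reduced ring is weakly perfect, which I would quote from the colouring theory of such graphs \cite{aalipour2012coloring}. The main obstacle in (b) is therefore the upper bound on $\chi$, which the cited reduced-ring result supplies by colouring each vertex according to its behaviour on the minimal primes of $C(X)$.
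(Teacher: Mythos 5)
Your proposal is correct, but for part (a) it takes a genuinely different route from the paper. The paper's proof of (a) is purely citation-based: it combines the Behboodi--Rakeei classification of annihilating-ideal graphs of small diameter, Azarpanah's result on the zero-divisor graph of $C(X)$, and the preceding proposition, with no construction at all. You instead reduce (a) to exhibiting two vertices at distance exactly $3$ (using the universal bound $\Di \leqslant 3$ and the equivalence $|X|=2 \Leftrightarrow \Di=1$ for the converse) and then build them explicitly from three open sets with pairwise disjoint closures; the key point you correctly isolate is that $\O(I)\cup\O(J)$ must be dense to kill all common neighbours, which your disjoint-closure choice guarantees. This is essentially the content the paper only develops later, in Lemma~\ref{distance}(c) and Proposition~\ref{ecc}(a), so your argument is self-contained and anticipates Section~3, at the cost of being longer than the two-line citation. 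For part (b) the two proofs are essentially the same in spirit: both outsource the inequality $\chi\leqslant\Cli$ to an external weak-perfection theorem for annihilating-ideal graphs (the paper to Behboodi--Rakeei, you to the reduced-ring colouring result, with the correct observation that $C(X)$ is reduced); your extra remark identifying cliques with cellular families is a preview of Theorem~\ref{cellularity and clique} and is not needed for (b) itself.

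One caveat: your symbol $\I(U)$ clashes with the paper's. The paper defines $\I(U)=M_U=\{f: U\subseteq\Ze(f)\}$, for which $\O(\I(U))=(X\setminus U)^{\circ}$, whereas you define $\I(U)=\{f:\Co(f)\subseteq U\}$, the ideal of Lemma~\ref{open in image}, for which $\O$ returns $U$ itself. Your argument is internally consistent with your definition, but if read against the paper's $\I$ the identities $\O(I)=X\setminus\overline{W_1}$ and $\O(J)=X\setminus\overline{W_2}$ would be false; you should rename the ideal or invoke Lemma~\ref{open in image} directly.
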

\begin{proof}	
	(a). It implies from \cite[Proposition 1.1]{behboodi2011annihilating2}, \cite[Corollary 1.3]{azarpanah2005zero} and  the previous proposition.
	
	(b). It is evident, by \cite[Corollary 2.11]{behboodi2011annihilating2}.
\end{proof}

The following proposition is an immediate consequence of \cite[Theorem 1.4]{behboodi2011annihilating}, \cite[Corollaries 2.11 and 2.12]{behboodi2011annihilating2} and this fact that $ X $ is finite \ff $ C(X) $ has just finitely many ideals.

\begin{Pro}
	The following statements are equivalent.
	\begin{itemize}
		\item[(a)] $ \AGC $ is a finite graph.
		\item[(b)] $ C(X) $ has only finitely many ideals.
		\item[(c)] Every vertex of $ \AGC $ has a finite degree.
		\item[(d)] $ X $ is finite.
		\item[(e)] $ \chi(\AGC) $ is finite.
		\item[(f)] $ \Cli(\AGC) $ is finite.
		\item[(g)] $ \AGC $ does not have an infinite clique.
		\item[(h)] $ \chi(\Gamma(C(X))) $ is finite.
	\end{itemize}
\end{Pro}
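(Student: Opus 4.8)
The plan is to make statement (d), ``$X$ is finite,'' the hub and to show every other condition is equivalent to it. The equivalence (b)$\Leftrightarrow$(d) is exactly the cited fact that $X$ is finite if and only if $C(X)$ has only finitely many ideals, so I take it for granted. The implications (b)$\Rightarrow$(a)$\Rightarrow$(c) are then immediate: if $C(X)$ has finitely many ideals, then $\AC$, being a subfamily of them, is finite, so $\AGC$ is a finite graph, and a finite graph has every vertex of finite degree. Likewise (d) supplies the finite-case halves of (e),(f),(g),(h): when $X$ has $n$ points, $C(X)\cong\mathbb{R}^n$, so $\AGC$ is finite and hence $\chi(\AGC)$ and $\Cli(\AGC)$ are finite with no infinite clique, while $\chi(\Gamma(C(X)))=\chi(\Gamma(\mathbb{R}^n))$ is finite because coloring a zero divisor by the least index in its support is a proper $n$-coloring (adjacent elements have disjoint supports, hence distinct least indices).

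The whole proposition therefore reduces to the single ``infinite'' direction: if $X$ is infinite then (a),(c),(e),(f),(g),(h) all fail, and the cleanest way is to produce at one stroke an infinite clique living simultaneously in $\AGC$ and in $\Gamma(C(X))$. The one genuinely topological step is the lemma that every infinite Hausdorff (hence Tychonoff) space has infinite cellularity: a maximal \emph{finite} family of pairwise disjoint nonempty open sets would, by maximality, have dense union, so — finite sets being closed in a $T_1$ space — at least one member must be infinite; splitting that member into two disjoint nonempty open pieces enlarges the family, a contradiction. Thus $X$ carries infinitely many pairwise disjoint nonempty open sets $V_1,V_2,\dots$, and by complete regularity we may pick nonzero $f_n\in C(X)$ with $\Co(f_n)\subseteq V_n$. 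Since the cozerosets are pairwise disjoint, $f_nf_m=0$ for $n\ne m$, so the $(f_n)$ are pairwise adjacent, nonzero, and each has nonzero annihilator (any $f_m$, $m\ne n$, annihilates $f_n$); they are distinct because $(f_n)=(f_m)$ would force $\Co(f_n)=\Co(f_m)$. Hence $\{(f_n)\}$ is an infinite clique of $\AGC$ (defeating (f) and (g), and via $\chi(\AGC)=\Cli(\AGC)$ also (e)), and, read as elements, $\{f_n\}$ is an infinite clique of $\Gamma(C(X))$ (defeating (h)); moreover $(f_1)$ is adjacent to all the others, so it has infinite degree (defeating (c)), and $\AGC$ is infinite (defeating (a)). To package (e)$\Leftrightarrow$(f) cleanly I invoke Proposition~\ref{diameter and clique=chi}(b), which already gives $\chi(\AGC)=\Cli(\AGC)$.

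I expect the main obstacle to be the cellularity lemma above; the verification that the $f_n$ generate pairwise distinct vertices with nonzero annihilators is routine once the disjoint cozerosets are in hand. Everything else is bookkeeping: the infinite clique makes (a),(c),(e),(f),(g),(h) fail together, the product structure $\mathbb{R}^n$ makes them hold together, and the dictionary (b)$\Leftrightarrow$(d) closes the loop. Alternatively, one may bypass the explicit construction and read (a)$\Leftrightarrow$(c)$\Leftrightarrow$(g) off \cite[Theorem~1.4]{behboodi2011annihilating} together with the chromatic comparisons in \cite[Corollaries~2.11 and 2.12]{behboodi2011annihilating2}, which is the quicker citation route the paper takes.
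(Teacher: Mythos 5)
Your proof is correct in substance but follows a genuinely different route from the paper's: the paper disposes of this proposition in one line by citing Theorem~1.4 of \cite{behboodi2011annihilating}, Corollaries~2.11 and 2.12 of \cite{behboodi2011annihilating2}, and the fact that $X$ is finite \ff $C(X)$ has finitely many ideals, whereas you give a self-contained argument whose only nontrivial ingredient is an explicit infinite clique built from pairwise disjoint cozero-sets. What your route buys is independence from the cited graph-theoretic results (and it anticipates the dictionary of Lemma~\ref{open in image} and Theorem~\ref{IJ=0 and G}, which the paper only develops in Section~2); what the citation route buys is brevity and the fact that (a)$\Leftrightarrow$(c)$\Leftrightarrow$(g) hold for arbitrary commutative rings, not just $C(X)$. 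Your finite-side bookkeeping ($C(X)\cong\R^n$, the least-index colouring of $\Gamma(\R^n)$, the distinctness of the ideals $\Ge{f_n}$ via $\Co(f_n)\neq\Co(f_m)$) all checks out, as does the logic that makes (d) the hub.

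The one step you should tighten is the cellularity lemma. As written, your maximal-finite-family argument shows that there exist pairwise disjoint families of nonempty open sets of every finite size (i.e.\ that the supremum defining $c(X)$ is infinite); it does not literally produce a single infinite family $V_1,V_2,\dots$, which is what the clique construction consumes. Note also that the splitting step replaces a member of the family rather than adjoining a set disjoint from all existing members, so it does not contradict maximality with respect to inclusion, only maximality of cardinality. The statement you need is still true and has an easy direct proof: in an infinite Hausdorff space there are always a nonempty open $U$ and an infinite open $W$ with $U\cap W=\emptyset$ (separate two points $a,b$ by disjoint open sets $A\ni a$, $B\ni b$; if $X\setminus\overline{A}$ is infinite take $U=A$ and $W=X\setminus\overline{A}$; otherwise $B\subseteq X\setminus\overline{A}$ is a finite open set, so $b$ is isolated and one takes $U=\{b\}$, $W=X\setminus\{b\}$). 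Recursing inside $W$ yields the infinite disjoint family $V_1,V_2,\dots$, after which the rest of your argument goes through verbatim.
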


\section{$\I(U)$ and $\O(I)$}

For each subset $S$ of $C(X)$, we denote $ \bigcup_{f\in S} \Co(f) $ by $\O(S)$, and for each subset $َU$ of $X$, we denote $ \{ f \in C(X) : U \subseteq \Ze(f) \} = M_U = \bigcap_{a \in U} M_a $ by $ \I(U) $. It is clear that $ \O(S) = X \setminus \left( \bigcap_{f \in S} \Ze(f) \right)  $ and if $G$ is an open set in $X$, then $ \I(G) = O_G $. First, in this section we study the properties of these maps, then by these maps the edges and vertices of $ \AGC $ are investigated.

\begin{Lem}
	Let $S$ and $T$ be two subsets of $C(X)$, $f$ be an element of $C(X)$ and $U$ and $V$ be two subsets of $X$. The following hold.
	\begin{itemize}
		\item[(a)] If $ S \subseteq T $, then $\O(S) \subseteq \O(T)$.
		\item[(b)] If $ U \subseteq V $, then $\I(V) \subseteq \I(U)$.
		\item[(c)] $ \O(S) = \emptyset $ \ff $ S = \{ 0 \} $.
		\item[(d)] $ \O(S) = X $ \ff $ \Ge{S} $ is a free ideal.
		\item[(e)] $ \I(U) = \{ 0 \} $ \ff $ U $ is dense in $X$.
		\item[(f)] $ \I(U) = C(X) $ \ff $ U = \emptyset $.
		\item[(g)] $ \O(\Ge{f}) = \Co(f) $.
		\item[(h)] $ \I(U) = \I(\overline{U}) $.
	\end{itemize}
\label{order}
\end{Lem}
\begin{proof}
	It is straightforward.
\end{proof}

\begin{Pro}
	Let $S$ be a subset of $C(X)$. If $ I = \big< S \big> $, then $ \O(I) = \O(S) $.
\label{G generated}
\end{Pro}
\begin{proof}
	By Lemma \ref{order}, $ \O(S) \subseteq \O(I) $. Suppose that $ g \in I $, so a finite family $ \{ f_i \}_{i=1}^n $ of elements of $S$ and a finite family $\{ g_i \}_{i=1}^n $ of elements of $C(X)$ exist such that $ g = \sum_{i=1}^n h_i f_i $, then 
	\[ \Co(g) = \Co \left( \sum_{i=1}^n h_i f_i \right) \subseteq  \bigcup_{i=1}^n \Co(h_i f_i) \subseteq \bigcup_{i=1}^n \Co(f_i) \subseteq \O(S) \]
	Hence $ \O(I) = \bigcup_{g \in I} \Co(g) \subseteq \O(S) $ and consequently $ \O(I) = \O(S) $. 
\end{proof}

\begin{Pro}
	Let $\{ I_\alpha \}_{\alpha \in A} $ be a family of ideals of $C(X)$, $I$ and $J$ be ideals of $C(X)$, $\{ U_\alpha \}_{\alpha \in A} $ be a family of subsets of $X$ and $U$ and $V$ be subsets of $X$. Then the following hold.
	\begin{itemize}
		\item[(a)] $ \O\left(\sum_{\alpha \in A} I_\alpha \right) = \bigcup_{\alpha \in A} \O(I_\alpha) $.
		\item[(b)] $ \O \left( \bigcap_{\alpha \in A} I_\alpha \right) \subseteq  \bigcap_{\alpha \in A} \O (I_\alpha) $.
		\item[(c)] $ \I \left( \bigcup_{\alpha \in A} U_\alpha \right) = \bigcap_{\alpha \in A} \I(U_\alpha) $.
		\item[(d)] $ \O(I\cap J) = \O(I) \cap \O(J) $.
		\item[(e)] $ \I(U \cap V) \supseteq \I(U) + \I(V) $.
	\end{itemize}
\label{intersection and union in O and I}
\end{Pro}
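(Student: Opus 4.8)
The plan is to reduce every part to the definitions of $\O$ and $\I$ together with the monotonicity recorded in Lemma~\ref{order} and with Proposition~\ref{G generated}; four of the five statements are then purely formal, and only the nontrivial direction of~(d) requires an actual idea. For~(a), note that $\sum_{\alpha\in A}I_\alpha$ is the ideal generated by $S:=\bigcup_{\alpha\in A}I_\alpha$, so Proposition~\ref{G generated} gives $\O\big(\sum_\alpha I_\alpha\big)=\O(S)$, and unwinding the definition of $\O$ on a union yields $\O(S)=\bigcup_{f\in S}\Co(f)=\bigcup_{\alpha}\O(I_\alpha)$. Part~(b) is pure monotonicity: since $\bigcap_\alpha I_\alpha\subseteq I_\beta$ for every $\beta$, Lemma~\ref{order}(a) gives $\O\big(\bigcap_\alpha I_\alpha\big)\subseteq\O(I_\beta)$, and intersecting over $\beta$ finishes it. Part~(c) is immediate from $\I(U)=\{f\in C(X):U\subseteq\Ze(f)\}$: a function annihilates $\bigcup_\alpha U_\alpha$ exactly when it annihilates each $U_\alpha$, i.e.\ $\I\big(\bigcup_\alpha U_\alpha\big)=\bigcap_\alpha\I(U_\alpha)$.

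Part~(d) is the heart of the matter. The inclusion $\O(I\cap J)\subseteq\O(I)\cap\O(J)$ is just the two-term case of~(b). For the reverse inclusion the key observation is that $\Co(fg)=\Co(f)\cap\Co(g)$, which holds because $\R$ has no zero divisors: $f(x)g(x)\neq0$ precisely when $f(x)\neq0$ and $g(x)\neq0$. Thus, given $x\in\O(I)\cap\O(J)$, I would choose $f\in I$ and $g\in J$ with $x\in\Co(f)\cap\Co(g)$; since $I$ and $J$ are ideals we have $fg\in I\cap J$, and then $x\in\Co(fg)\subseteq\O(I\cap J)$. This product step — passing from two functions separately nonzero at $x$ to a single function lying in the intersection that is still nonzero at $x$ — is the only nonformal point, and it is exactly where the field structure of the codomain enters.

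Finally, for~(e), since $U\cap V\subseteq U$ and $U\cap V\subseteq V$, Lemma~\ref{order}(b) gives $\I(U)\subseteq\I(U\cap V)$ and $\I(V)\subseteq\I(U\cap V)$. As $\I(U\cap V)$ is itself an ideal of $C(X)$, it contains the smallest ideal containing $\I(U)\cup\I(V)$, namely $\I(U)+\I(V)$, which is the asserted inclusion. I expect no obstacle here beyond checking that $\I(W)$ is genuinely an ideal, which is clear from $\I(W)=\bigcap_{a\in W}M_a$.
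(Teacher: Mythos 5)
Your proposal is correct and follows essentially the same route as the paper: parts (a), (b), (c), (e) by Proposition~\ref{G generated} and the monotonicity of Lemma~\ref{order}, and part (d) via the identity $\Co(f)\cap\Co(g)=\Co(fg)$ together with $fg\in I\cap J$, which is exactly the paper's computation written pointwise. No gaps.
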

\begin{proof}
	
	(a). By Proposition \ref{G generated}, 
	\[ \O \left(\sum_{\alpha \in A} I_\alpha \right) = \O \left( \bigcup_{\alpha \in A} I_\alpha \right) = \bigcup_{f \in \bigcup_{\alpha \in A} I_\alpha} \Co(f) = \bigcup_{\alpha \in A} \bigcup_{f \in I_\alpha} \Co(f) = \bigcup_{\alpha \in A} \O(I_\alpha).  \]
	
	(b). It follows immediately from Proposition \ref{order}.
	
	(c). Lemma \ref{order}, concludes that $ \I\left( \bigcup_{\alpha \in A} U_\alpha \right) \subseteq \bigcap_{\alpha \in A} \I(U_\alpha) $. 
	\begin{align*}
		f \in \bigcap_{\alpha \in A} \I(U_\alpha) \quad & \Rightarrow \quad f \in \I(U_\alpha) \quad \forall \alpha \in A \\
												  		& \Rightarrow \quad U_\alpha \subseteq \Ze(f) \quad \forall \alpha \in A \\
												  		& \Rightarrow \quad \bigcup_{\alpha \in A} U_\alpha \subseteq \Ze(f) \\
												  		& \Rightarrow \quad f \in \I \left(\bigcup_{\alpha \in A} U_\alpha \right)
	\end{align*}
	Thus $ \bigcap_{\alpha \in A} \I(U_\alpha) \subseteq \I\left( \bigcup_{\alpha \in A} U_\alpha \right) $ and consequently $ \I\left( \bigcup_{\alpha \in A} U_\alpha \right) = \bigcap_{\alpha \in A} \I(U_\alpha) $.
	
	(d). By Lemma \ref{order}, $ \O(I \cap J) \subseteq \O(I) \cap \O(J) $. 
	\begin{align*}
		\O(I) \cap \O(J) & = \left( \bigcup_{f\in I} \Co(f) \right) \cap \left( \bigcup_{g\in I} \Co(g) \right) \\
						 & = \bigcup_{\substack{f \in I \\ g \in J}} \left(\Co(f) \cap \Co(g)\right) = \bigcup_{\substack{f \in I \\ g \in J}} \Co(fg) \\
						 & \subseteq \bigcup_{ h \in I \cap J } \Co(h)  = \O(I\cap J)
	\end{align*}   
	Thus the equality holds.
	
	(e). It is clear, by Lemma \ref{order}.
\end{proof}

In the following examples we show that the equality in parts (b) and (e) of the above proposition need not establish. 

\begin{Exa}
	Consider $C(\R)$. For each $r \in \Q$, we have $ \O(M_r) = \R \setminus \{r\} $, thus $ \bigcap_{r \in Q}  \O(M_r) = \R \setminus \Q $. Also $ \bigcap_{r \in \Q} M_r = M_\Q = \{ 0 \} $, so $ \O \left( \bigcap_{r \in \Q} M_r \right) = \O ( \{ 0 \} ) = \R $.   
\end{Exa}

\begin{Exa}
	Consider $ C(\R) $. Easily we can see that, $ \I(\Q) = \{ 0 \} = \I(\R \setminus \Q) $, and thus $ \I( \Q \cap \R \setminus \Q ) = \I ( \emptyset ) = C(\R) \neq \{ 0 \} = \I(\Q) + \I(\R \setminus \Q) $.
\end{Exa}

\begin{Cor}
	Let $U$ and $V$ be subsets of $X$. The following are equivalent.
	\begin{itemize}
		\item[(a)] $ U \cup V $ is dense in $X$.
		\item[(b)] $ \I(U) \cap \I(V) = \{ 0 \} $.
		\item[(c)] $ \I(U) \I(V) = \{ 0 \} $.
	\end{itemize}
\label{I(U) I(V)=0}
\end{Cor}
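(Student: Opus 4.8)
The plan is to reduce everything to two facts already established — the identity $\I(U) \cap \I(V) = \I(U \cup V)$ and the characterization of density — and then to exploit that $C(X)$ is a reduced ring to bridge between the intersection and the product of the two ideals. Concretely, I would prove the chain (a) $\Leftrightarrow$ (b) and then (b) $\Leftrightarrow$ (c), rather than a single cyclic implication, since each of these two links rests on a different ingredient.

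For (a) $\Leftrightarrow$ (b): by Proposition \ref{intersection and union in O and I}(c) applied to the two-element family $\{U,V\}$, we have $\I(U) \cap \I(V) = \I(U \cup V)$. Hence $\I(U) \cap \I(V) = \{0\}$ \ff $\I(U \cup V) = \{0\}$, and by Lemma \ref{order}(e) the latter holds precisely when $U \cup V$ is dense in $X$. This settles the equivalence of (a) and (b) with no further computation.

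For (b) $\Leftrightarrow$ (c): the implication (b) $\Rightarrow$ (c) is automatic, since in any commutative ring the product ideal is contained in the intersection, so $\I(U)\I(V) \subseteq \I(U) \cap \I(V) = \{0\}$. The reverse implication (c) $\Rightarrow$ (b) is the only step with genuine content. Assuming $\I(U)\I(V) = \{0\}$, I would take an arbitrary $f \in \I(U) \cap \I(V)$ and observe that $f^2 = f \cdot f \in \I(U)\I(V) = \{0\}$, so $f^2 = 0$; since $C(X)$ is reduced, $f(x)^2 = 0$ forces $f(x) = 0$ for every $x \in X$, hence $f = 0$ and therefore $\I(U) \cap \I(V) = \{0\}$.

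The only non-formal ingredient, and thus the point I would flag as the crux, is the use of reducedness of $C(X)$ in the step (c) $\Rightarrow$ (b); every other implication is a direct appeal to the preceding lemma and proposition. I would remark that this argument is just the general fact that in a reduced commutative ring the conditions $IJ = \{0\}$ and $I \cap J = \{0\}$ are equivalent for ideals $I,J$, so the equivalence of (b) and (c) holds verbatim over any reduced ring and is not special to $C(X)$.
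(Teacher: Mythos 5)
Your proof is correct and follows essentially the same route as the paper, which simply cites Lemma \ref{order} and Proposition \ref{intersection and union in O and I} for the whole corollary. The one step the paper leaves implicit --- that $\I(U)\I(V)=\{0\}$ forces $\I(U)\cap\I(V)=\{0\}$ because $C(X)$ is reduced --- is exactly the detail you supply, and your argument for it is right.
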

\begin{proof}
	It follows from Lemma \ref{order} and Proposition \ref{intersection and union in O and I}.
\end{proof}

\begin{Pro}
	Let $ I $ be an ideal of $C(X)$ and $U$ be a subset of $X$. The following hold.
	\begin{itemize}
		\item[(a)] $ \O(\I(U)) = ( X \setminus U )^\circ $.
		\item[(b)] $\I(\O(I))=\An(I)$.
		\item[(c)] $ ( \I \O )^3 (I) = ( \I \O ) (I) $.
		\item[(d)] $ \O(\An(I)) = ( X \setminus \O(I) )^\circ $. 
	\end{itemize}
\label{O and I}
\end{Pro}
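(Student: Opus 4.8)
The plan is to prove part (b) first, then (a), and to deduce (c) and (d) formally from these two.

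For part (b), I would simply unwind the definitions. The key observation is that for $f,g \in C(X)$ one has $fg = 0$ \ff $\Co(f) \cap \Co(g) = \emptyset$, since $\Co(fg) = \Co(f) \cap \Co(g)$. Therefore $g \in \An(I)$ means $\Co(g) \cap \Co(f) = \emptyset$ for every $f \in I$, which is equivalent to $\Co(g) \cap \bigcup_{f \in I}\Co(f) = \Co(g) \cap \O(I) = \emptyset$, i.e.\ $\O(I) \subseteq \Ze(g)$. By the definition of $\I$, this last condition is exactly $g \in \I(\O(I))$, so $\An(I) = \I(\O(I))$.

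For part (a) I would establish the two inclusions separately. If $f \in \I(U)$, then $U \subseteq \Ze(f)$, so $\Co(f)$ is an open set disjoint from $U$, whence $\Co(f) \subseteq (X \setminus U)^\circ$; taking the union over all such $f$ yields $\O(\I(U)) \subseteq (X \setminus U)^\circ$. The reverse inclusion is where the topology enters and is the main obstacle. Given $p \in (X \setminus U)^\circ$, I would choose an open neighbourhood $W$ of $p$ with $W \subseteq X \setminus U$, so that $U \subseteq X \setminus W$. Since $X$ is Tychonoff, complete regularity produces an $f \in C(X)$ with $f(p) \neq 0$ that vanishes on the closed set $X \setminus W$; then $U \subseteq X \setminus W \subseteq \Ze(f)$ gives $f \in \I(U)$, while $p \in \Co(f) \subseteq \O(\I(U))$. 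This yields $(X \setminus U)^\circ \subseteq \O(\I(U))$, and hence equality.

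Parts (c) and (d) are then purely formal. For (c), part (b) shows $(\I\O)(J) = \An(J)$ for every ideal $J$ (noting $\An(I)$ is again an ideal), so $(\I\O)(I) = \An(I)$ and $(\I\O)^3(I) = \An^3(I)$. It then remains to invoke the standard triple-annihilator identity $\An^3 = \An$: from $S \subseteq \An(\An(S))$ one gets $\An^3(S) \subseteq \An(S)$ by applying the order-reversing operator $\An$, and substituting $\An(I)$ for $S$ gives the reverse inclusion $\An(I) \subseteq \An^3(I)$. For (d), I would combine the previous parts: $\O(\An(I)) = \O(\I(\O(I)))$ by (b), and applying (a) with $U = \O(I)$ gives $\O(\I(\O(I))) = (X \setminus \O(I))^\circ$, as required.
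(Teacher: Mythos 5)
Your proposal is correct and follows essentially the same route as the paper: part (b) by unwinding $fg=0$ into $\O(I)\subseteq \Ze(g)$, part (a) by identifying $\O(\I(U))$ with the union of cozero sets inside $X\setminus U$, and parts (c) and (d) as formal consequences via $\An^3=\An$ and the composition $\O\circ\I$. The only difference is cosmetic: you make explicit the appeal to complete regularity (cozero sets form a base) that the paper's one-line chain of equalities in (a) leaves implicit.
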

\begin{proof}
	(a). $ \displaystyle \O(\I(U)) = \bigcup_{ f \in \I(U) } \Co(f) = \bigcup_{ \Ze(f) \supseteq U} \Co(f) = \bigcup_{ \Co(f) \subseteq X \setminus U } \Co(f) = ( X \setminus U )^\circ  $.
	
	(b). \vspace{-0.5cm}	\begin{align*}
	f \in \An(I) \quad & \Leftrightarrow \quad \forall g \in I \quad fg= 0 \\
	& \Leftrightarrow \quad \forall g \in I \quad \Ze(f) \cup \Ze(g) = X \\
	& \Leftrightarrow \quad \forall g \in I \quad \Co(g) \subseteq \Ze(f) \\
	& \Leftrightarrow \quad \bigcup_{g \in I} \Co(g) \subseteq \Ze(f) \\
	& \Leftrightarrow \quad \O(I) \subseteq \Ze(f) \\
	& \Leftrightarrow \quad f \in \I(\O(I))  
	\end{align*}
	Thus $ \An(I) = \I(\O(I)) $.

	(c). Since $ \An^3(I) = \An(I) $, it follows from (c), immediately.
	
	(d). By (b) and (a), $ \O(\An(I)) = \O(\I(\O(I))) = ( X \setminus \O(I) )^\circ $.  
\end{proof}

\begin{Cor}
	If $I$ is a zero divisor ideal of $C(X)$, then $I$ is a fixed ideal.
\label{zero diviosr is fixed}
\end{Cor}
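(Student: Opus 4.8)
The plan is to translate both the hypothesis and the conclusion into the language of the map $\O$ and then chain together the results already established in this section.

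First I would reformulate what it means for $I$ to be fixed. By the very definition of $\O$ we have $\O(I) = X \setminus \bigcap_{f \in I} \Ze(f)$, so the intersection $\bigcap_{f \in I} \Ze(f)$ is nonempty precisely when $\O(I) \neq X$; equivalently, this is Lemma \ref{order}(d) applied to $S = I$, which says $\O(I) = X$ \ff $I = \Ge{I}$ is free. Hence $I$ is fixed \ff $\O(I) \neq X$, and it suffices to prove $\O(I) \neq X$.

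Next I would unwind the hypothesis. A zero divisor ideal is, by definition, a nonzero ideal with $\An(I) \neq \{0\}$, i.e. $I \in \AC$. By Proposition \ref{O and I}(b) we have $\An(I) = \I(\O(I))$, hence $\I(\O(I)) \neq \{0\}$. Now Lemma \ref{order}(e), read in contrapositive form, states that $\I(U) \neq \{0\}$ forces $U$ not to be dense in $X$; applying this with $U = \O(I)$ gives $\overline{\O(I)} \neq X$, and in particular $\O(I) \neq X$. Combined with the reformulation above, this yields that $I$ is fixed.

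There is essentially no genuine obstacle: the corollary drops out of Proposition \ref{O and I}(b) together with Lemma \ref{order}(d),(e). The only care required is the purely definitional bookkeeping — recognizing that ``zero divisor ideal'' means $\An(I) \neq \{0\}$ and that ``fixed'' means $\O(I) \neq X$ — after which the cited facts assemble immediately into the chain $\An(I) \neq \{0\} \Rightarrow \I(\O(I)) \neq \{0\} \Rightarrow \O(I) \text{ is not dense} \Rightarrow \O(I) \neq X$. I would note that the implication is genuinely one-directional at the step ``not dense $\Rightarrow$ proper'', so the converse (a fixed ideal need not be a zero divisor ideal) is not available here, which is consistent with the statement being a one-way implication.
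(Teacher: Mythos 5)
Your proposal is correct and follows essentially the same route as the paper: from $\An(I)\neq\{0\}$ you get $\I(\O(I))\neq\{0\}$ via Proposition \ref{O and I}(b), conclude that $\O(I)$ is not dense by Lemma \ref{order}(e), hence $\O(I)\neq X$, i.e. $\bigcap_{f\in I}\Ze(f)\neq\emptyset$ and $I$ is fixed. The only cosmetic difference is that you route the final step through Lemma \ref{order}(d), while the paper just unwinds the definition of $\O(I)$ directly.
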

\begin{proof}
	Since $I$ is a zero divisor, $ \An(I) \neq \{ 0 \} $, so Proposition \ref{O and I}, deduces $ \I(\O(I)) \neq \{ 0 \} $, hence $\O(I)$ is not dense, by Lemma \ref{order}. Thus $ \O(I) \neq X $ and therefore $ \bigcap_{f \in I} \Ze(f) = X \setminus \O(I) \neq \emptyset $, so $I$ is a fixed ideal.
\end{proof}

The converse of the above corollary need not be true because for instance $ M_0 \subseteq C(\R) $ is a fixed ideal which is not a zero divisor ideal.

\begin{Cor}
	Let $P$ be a prime ideal of $C(X)$. $P$ is a zero divisor \ff there is some isolated point $p$ in $X$ such that $P = M_p = O_p$.
\end{Cor}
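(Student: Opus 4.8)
The plan is to prove both implications, with the isolated point $p$ appearing as the unique common zero that the zero-divisor hypothesis forces to be open.

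For the easy direction I assume $p$ is isolated and $P = M_p = O_p$, and I show $P$ is a zero divisor. Since $\{p\}$ is then clopen, the indicator function $e=\chi_{\{p\}}$ is continuous, and $fe=0$ for every $f\in M_p$ (on $\{p\}$ the factor $f$ vanishes, off $\{p\}$ the factor $e$ vanishes). As $e\neq 0$ this gives $\An(P)\neq\{0\}$, and since $|X|>1$ there is a function in $M_p\setminus\{0\}$, so $P=M_p\neq\{0\}$; hence $P$ is a zero divisor.

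For the converse I assume $P$ is a prime zero divisor. By Corollary \ref{zero diviosr is fixed} the ideal $P$ is fixed, so I may fix $p\in\bigcap_{f\in P}\Ze(f)$, which gives $P\subseteq M_p$. The key preliminary inclusion I would establish is $O_p\subseteq P$: given $f\in O_p$, choose an open $U\ni p$ with $f|_U=0$; by complete regularity of $X$ pick $g\in C(X)$ with $g(p)=1$ and $g|_{X\setminus U}=0$, so that $fg=0\in P$. Since $g\notin M_p\supseteq P$, primeness of $P$ forces $f\in P$. Thus $O_p\subseteq P\subseteq M_p$, and this is exactly the step where primeness is essential.

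Next I exploit the zero-divisor hypothesis. By Proposition \ref{O and I}(b) we have $\An(P)=\I(\O(P))$, and as this is nonzero Lemma \ref{order}(e) shows $\O(P)$ is not dense; hence $W:=X\setminus\overline{\O(P)}$ is a nonempty open set. Every point of $W$ lies in $X\setminus\O(P)=\bigcap_{f\in P}\Ze(f)$, so every $f\in P$, and in particular every $f\in O_p$, vanishes on $W$. However, for any $q\neq p$ I can use disjoint open neighborhoods of $p$ and $q$ together with complete regularity to produce $f\in O_p$ with $f(q)\neq 0$; this rules out $q\in W$, so $W=\{p\}$. Thus $\{p\}$ is open, i.e. $p$ is isolated, whence $M_p=O_p$, and squeezing $O_p\subseteq P\subseteq M_p$ yields $P=M_p=O_p$.

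I expect the main obstacle to be the central maneuver of the converse: converting the purely algebraic fact $\An(P)\neq\{0\}$, via the identity $\An(P)=\I(\O(P))$ and the density criterion of Lemma \ref{order}(e), into a genuinely \emph{open} set $W$ of common zeros of $P$, and then using the inclusion $O_p\subseteq P$ to collapse $W$ to the single point $p$. Establishing $O_p\subseteq P$ from primeness is the other delicate ingredient, and the remaining separation arguments are routine consequences of $X$ being Tychonoff.
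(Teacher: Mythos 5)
Your proof is correct and follows essentially the same route as the paper: both directions hinge on Corollary \ref{zero diviosr is fixed}, the sandwich $O_p\subseteq P\subseteq M_p$, and the translation of $\An(P)\neq\{0\}$ into non-density of $\O(P)$ via Proposition \ref{O and I}(b) and Lemma \ref{order}(e). The only differences are cosmetic and to your credit: you supply a self-contained proof of the inclusion $O_p\subseteq P$ from primeness and complete regularity (the paper simply invokes the standard structure theory of fixed prime ideals of $C(X)$), and in the easy direction you exhibit the explicit annihilating idempotent $\chi_{\{p\}}$ instead of rerunning the $\I$/$\O$ density argument.
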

\begin{proof}
	($\Rightarrow$). By Corollary \ref{zero diviosr is fixed}, $P$ is fixed,  so there is some $p \in X$ such that $O_p \subseteq P \subseteq M_p $. Thus $ \O(P) = X \setminus \left( \bigcap_{f \in P} \Ze(f) \right) = X \setminus \{ p \} $. Since $P$ is a zero divisor, $\An(P) \neq \{ 0 \} $ and therefore $\I(\O(I)) \neq \{ 0 \} $, by Proposition \ref{O and I}. Now Lemma \ref{order}, deduces $ X \setminus \{ p \} $ is not dense and thus $ p $ is an isolated point. Consequently, $P = M_p = O_p $.
	
	($\Leftarrow$). Since $P = M_p $, $ \O(P) = X \setminus \left( \bigcap_{f \in P} \Ze(f) \right) = X \setminus \{p\} $. Since $p$ is an isolated point, it follows that $ \O(P) $ is not dense in $X$, thus $\I(\O(P)) \neq \{ 0 \}$, by Lemma \ref{order}. Now Proposition \ref{O and I}, follows that $ \An(P) \neq \{ 0 \}$ and therefore $P$ is a zero divisor.     
\end{proof}

\begin{Lem}
	If $G$ is an open subset of $X$, then an ideal $I$ exists such that $\O(I) = G$. i.e., $ \O $ maps the family of all ideals of $ C(X) $ onto the family of all open subsets of $ X $. 
\label{open in image}
\end{Lem}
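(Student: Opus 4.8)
The plan is to reduce the statement directly to Proposition \ref{O and I}(a), which already computes $\O(\I(U)) = ( X \setminus U )^\circ$ for any subset $U$ of $X$. The guiding idea is that the pair of maps $\O$ and $\I$ is set up precisely so that $\I$ produces an ideal whose $\O$-image recovers the interior of the complement; so to hit a prescribed open set $G$, I should feed $\I$ the complement of $G$.

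Concretely, I would first set $U = X \setminus G$ and let $I = \I(U)$. Here I should note in passing that $\I(U) = M_U = \bigcap_{a \in U} M_a$ really is an ideal of $C(X)$ (it is an intersection of the maximal ideals $M_a$, or one checks directly that it is closed under addition and under multiplication by arbitrary elements of $C(X)$), so that $I$ is a legitimate input to $\O$ and an admissible witness.

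Next I would apply Proposition \ref{O and I}(a) to this $U$, obtaining
\[ \O(I) = \O(\I(X \setminus G)) = \big( X \setminus (X \setminus G) \big)^\circ = G^\circ. \]
Since $G$ is open we have $G^\circ = G$, and therefore $\O(I) = G$, which is exactly the desired conclusion. To complete the justification of the second (``i.e.\ '') assertion, I would also remark that $\O(I) = \bigcup_{f \in I} \Co(f)$ is always a union of cozerosets and hence open, so that $\O$ genuinely maps the family of all ideals of $C(X)$ \emph{into} the open subsets of $X$; combined with the surjectivity just established, this gives that $\O$ maps the ideals of $C(X)$ onto the open sets.

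There is no real obstacle here: the content has been front-loaded into Proposition \ref{O and I}(a), and the only thing the present lemma adds is the observation that choosing $U = X \setminus G$ turns $( X \setminus U )^\circ$ into $G$ precisely because $G$ is open. If I had to point to the single step that carries the weight, it is the verification that $G^\circ = G$, i.e.\ the use of the openness hypothesis — everything else is a substitution into an already-proved identity.
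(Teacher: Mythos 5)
Your proof is correct and is essentially the paper's argument in disguise: the ideal $\I(X\setminus G)=\{f\in C(X): X\setminus G\subseteq \Ze(f)\}=\{f\in C(X):\Co(f)\subseteq G\}$ is exactly the (generating set of the) ideal the paper uses, and both computations bottom out at the same complete-regularity fact $\bigcup_{\Co(f)\subseteq G}\Co(f)=G$ — you just access it through Proposition \ref{O and I}(a) while the paper accesses it through Proposition \ref{G generated} and a direct calculation. The reduction is legitimate since Proposition \ref{O and I}(a) is proved independently of this lemma, so there is no circularity.
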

\begin{proof}
	Put $ I = \Big< \big\{ f \in C(X) : \Co(f) \subseteq G \big\} \Big> $. Then by Proposition \ref{G generated},
	\[ \O(I) = \O(\Ge{ \{ f : \Co(f) \subseteq G \} } = \bigcup_{ \Co(f) \subseteq G } \Co(f) = G  \qedhere \]
\end{proof}

\begin{Lem}
	For each ideal $I$ of $C(X)$, we have $\O(I_z) = \O(I)$.
\label{G(I) = G(Iz)}
\end{Lem}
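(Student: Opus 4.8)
The plan is to use the fact that $\O(I)$ depends on $I$ only through the family $\Ze(I) = \{\Ze(f) : f \in I\}$ of zerosets of its members, combined with the explicit description of $I_z$ as the smallest $z$-ideal $\Ze^{-1}(\Ze(I))$ containing $I$ recorded in the introduction. Since $I \subseteq I_z$, Lemma \ref{order}(a) immediately yields the inclusion $\O(I) \subseteq \O(I_z)$, so only the reverse inclusion requires work.

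For the reverse inclusion I would argue pointwise on cozerosets. Let $g \in I_z$. By the description $I_z = \Ze^{-1}(\Ze(I))$, there is some $f \in I$ with $\Ze(g) = \Ze(f)$, whence $\Co(g) = X \setminus \Ze(g) = X \setminus \Ze(f) = \Co(f) \subseteq \O(I)$. Taking the union over all $g \in I_z$ gives $\O(I_z) = \bigcup_{g \in I_z} \Co(g) \subseteq \O(I)$, and together with the first inclusion this establishes the equality.

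An equivalent and perhaps cleaner way to package the argument, which I would mention, is to observe that passing to $I_z$ does not enlarge the family of zerosets: $\Ze(I_z) = \Ze(I)$, since every member of $I_z$ has a zeroset already occurring among the zerosets of $I$. Hence $\bigcap_{f \in I_z} \Ze(f) = \bigcap_{f \in I} \Ze(f)$, and applying the identity $\O(S) = X \setminus \bigcap_{f \in S} \Ze(f)$ from the opening of Section 2 to both ideals yields $\O(I_z) = \O(I)$ at once.

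There is essentially no serious obstacle here; the only point to be careful about is invoking the correct characterization of $I_z$, namely that it equals $\Ze^{-1}(\Ze(I))$, so that each of its elements shares a zeroset with some element of $I$. Everything else reduces to the monotonicity of $\O$ from Lemma \ref{order} and the set-theoretic identity relating $\O$ to intersections of zerosets.
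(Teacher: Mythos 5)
Your proposal is correct and follows essentially the same route as the paper: the paper's proof is precisely your ``cleaner packaging,'' namely that $\Ze(I_z)=\Ze(I)$ forces the families of cozerosets, and hence their unions, to coincide. Your more detailed element-by-element version using $I_z=\Ze^{-1}(\Ze(I))$ is just an expansion of that same observation.
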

\begin{proof}
	Since $\Ze(I) = \Ze(I_z)$, so $ \{ \Co(f) : f \in I \} = \{ \Co(f) : f \in I_z \} $ and therefore $ \O(I_z) = \O(I) $.
\end{proof}

\begin{Thm}
	$ \O $ is a map from the family of all $z$-ideals of $C(X)$ onto the family of all open sets of $X$.
\end{Thm}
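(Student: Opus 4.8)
The statement packages two assertions: first, that $\O$ carries every $z$-ideal to an open subset of $X$, and second, that the induced map is onto, i.e. that every open set arises as $\O(I)$ for some $z$-ideal $I$. The first assertion is immediate and requires no new work, since for any subset $S$ of $C(X)$ the set $\O(S) = \bigcup_{f \in S} \Co(f)$ is a union of cozerosets and hence open; restricting the domain of $\O$ to $z$-ideals therefore cannot produce anything but open sets. So the genuine content lies entirely in the surjectivity.

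For the surjectivity the plan is simply to glue together the two immediately preceding lemmas. Given an arbitrary open set $G \subseteq X$, Lemma \ref{open in image} supplies an ideal $I$ of $C(X)$ with $\O(I) = G$. This $I$ need not itself be a $z$-ideal, so I would pass to $I_z$, the smallest $z$-ideal containing $I$ (described in the introduction as $Z^{-1}(Z(I))$). By Lemma \ref{G(I) = G(Iz)} the map $\O$ does not distinguish an ideal from its $z$-ideal closure, so $\O(I_z) = \O(I) = G$. Since $I_z$ is a $z$-ideal, the set $G$ lies in the image of $\O$ restricted to $z$-ideals, which is exactly what surjectivity demands.

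There is essentially no remaining obstacle: the two substantive steps — realizing an arbitrary open set as $\O(I)$ for some ideal, and showing that $\O$ is invariant under passage to the $z$-ideal generated by $I$ — have already been isolated as Lemmas \ref{open in image} and \ref{G(I) = G(Iz)}. What this theorem contributes is only the observation that these two facts dovetail so as to upgrade the surjectivity from the family of all ideals to the smaller family of $z$-ideals. If one wished to be maximally careful, the single point worth recording is that $I_z$ is a bona fide $z$-ideal containing $I$; both properties are standard features of the smallest $z$-ideal $Z^{-1}(Z(I))$ already noted in the introduction, so no further verification is needed.
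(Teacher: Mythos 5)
Your proof is correct and follows exactly the paper's intended argument: the paper's own proof simply cites Lemmas \ref{open in image} and \ref{G(I) = G(Iz)}, and you have filled in precisely the gluing step (realize $G$ as $\O(I)$, pass to $I_z$, invoke invariance of $\O$ under $z$-closure) that those citations presuppose. Your additional remark that $\O(S)$ is always open, being a union of cozerosets, is a worthwhile explicit note that the paper leaves implicit.
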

\begin{proof}
	It is clear by Lemmas \ref{open in image} and \ref{G(I) = G(Iz)}.
\end{proof}

\begin{Thm}
	Let $I$ and $J$ be two ideals of $C(X)$. The following statements hold
	\begin{itemize}
		\item[(a)] $ I J = \{ 0 \} $ \ff $ \O(I) \cap  \O(J) = \emptyset $.
		\item[(b)] $ I \An(J) = \{ 0 \} $ \ff $ \O(I) \subseteq \overline{\O(J)} $.
		\item[(c)] $ \An(I) \An(J) = \{ 0 \} $ \ff $ \overline{\O(I) \cup \O(J)} = \emptyset $.
		\item[(d)] $ \overline{\O(I)} = \overline{\O(J)} $ \ff $ \An(I) = \An(J) $.
		\item[(e)] $  \I(U) I = \{ 0 \} $ \ff $ \O(I) \subseteq \overline{U} $.
	\end{itemize}
\label{IJ=0 and G}
\end{Thm}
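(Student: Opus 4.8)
The plan is to establish (a) from first principles and then reduce (b)--(e) to (a) using the dictionary recorded in Proposition \ref{O and I} together with the elementary topological identities $(X\setminus A)^\circ = X\setminus\overline{A}$ and $\overline{X\setminus A} = X\setminus A^\circ$.

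For (a), I would first note that since the product ideal $IJ$ is generated by the products $fg$ with $f\in I$ and $g\in J$, and a finite sum of zeros is zero, $IJ = \{0\}$ holds \ff $fg = 0$ for every $f\in I$ and $g\in J$. Now $fg = 0$ is equivalent to $\Co(fg) = \emptyset$, and $\Co(fg) = \Co(f)\cap\Co(g)$; hence $IJ = \{0\}$ \ff $\Co(f)\cap\Co(g) = \emptyset$ for all $f\in I$, $g\in J$. Distributing the union, $\O(I)\cap\O(J) = \left(\bigcup_{f\in I}\Co(f)\right)\cap\left(\bigcup_{g\in J}\Co(g)\right) = \bigcup_{f\in I,\, g\in J}\big(\Co(f)\cap\Co(g)\big)$, so this set is empty exactly when every pairwise intersection vanishes, which gives (a).

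With (a) in hand, (b) and (e) are immediate. For (b), apply (a) to $I$ and $\An(J)$, so that $I\An(J)=\{0\}$ \ff $\O(I)\cap\O(\An(J))=\emptyset$; then substitute $\O(\An(J)) = (X\setminus\O(J))^\circ = X\setminus\overline{\O(J)}$ from Proposition \ref{O and I}(d), and the intersection is empty exactly when $\O(I)\subseteq\overline{\O(J)}$. For (e), apply (a) to $\I(U)$ and $I$ and use $\O(\I(U)) = (X\setminus U)^\circ = X\setminus\overline{U}$ from Proposition \ref{O and I}(a); the emptiness of $(X\setminus\overline{U})\cap\O(I)$ is precisely $\O(I)\subseteq\overline{U}$. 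For (c) I would again invoke (a), this time for $\An(I)$ and $\An(J)$, giving $\An(I)\An(J)=\{0\}$ \ff $\O(\An(I))\cap\O(\An(J))=\emptyset$; writing each factor as $X\setminus\overline{\O(\cdot)}$ turns this into $X\setminus\overline{\O(I)\cup\O(J)} = \emptyset$, i.e. the density of $\O(I)\cup\O(J)$, so the right-hand condition should read $\overline{\O(I)\cup\O(J)} = X$. Finally, for (d), I would rewrite $\An(I)=\I(\O(I))$ and $\An(J)=\I(\O(J))$ via Proposition \ref{O and I}(b) and replace these by $\I(\overline{\O(I)})$ and $\I(\overline{\O(J)})$ via Lemma \ref{order}(h); the forward direction is then immediate from $\I$, while for the converse I would apply $\O$ to $\I(\O(I))=\I(\O(J))$ and use Proposition \ref{O and I}(a) to obtain $X\setminus\overline{\O(I)} = X\setminus\overline{\O(J)}$, hence $\overline{\O(I)} = \overline{\O(J)}$.

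The only genuinely substantive step is (a); the remaining parts are bookkeeping around the Galois-type correspondence between $\O$ and $\I$ already packaged in Proposition \ref{O and I}. The point to watch throughout is the repeated passage between $(X\setminus A)^\circ$ and $X\setminus\overline{A}$, which is exactly where the closures in (b)--(e) enter, together with the apparent typo in (c) where $\emptyset$ should be $X$.
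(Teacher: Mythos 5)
Your proof is correct and follows essentially the same route as the paper's: part (a) by direct comparison of cozero sets, and parts (b)--(e) by reducing to (a) through the identities $\O(\An(J)) = (X\setminus\O(J))^\circ$ and $\O(\I(U)) = (X\setminus U)^\circ$ from Proposition \ref{O and I}. You are also right that (c) as stated contains a typo: the condition should read $\overline{\O(I)\cup\O(J)} = X$, which is exactly what the paper's own proof derives.
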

\begin{proof}
	(a $\Rightarrow$). Since $IJ = \{ 0 \} $, $ I \subseteq \An(J) $, thus $ I \subseteq \I(\O(J)) $, by Proposition \ref{O and I}. Now suppose that $ f \in I $, then $ f \in \I(\O(J)) $, hence $ \Ze(f) \supseteq \O(J) $, so $\Co(f) \subseteq X \setminus \O(I) $. This follows that $ \O(I) = \bigcup_{f\in I} \Co(f) \subseteq X \setminus \O(I) $ and therefore $\O(I) \cap \O(J) = \emptyset$.
	
	(a $\Leftarrow$). Suppose that $ f \in I $ and $ g \in J$, then $ \Co(f) \subseteq \O(I) $ and $ \Co(g) \subseteq \O(J)$, thus $ \Co(f) \cap \Co(g) \subseteq \O(I) \cap \O(J) = \emptyset $, so $ f g = 0 $ and therefore $ I J = \{ 0 \} $.
	
	(b). By part (a), $ I \An(J) = \{0\} $ \ff $ \O(I) \cap \O(\An(J)) = \emptyset $. By Proposition \ref{O and I}, it is equivalent to $ \O(I) \cap \left( X \setminus \O(I) \right)^\circ = \emptyset $. One can see easily that, it is equivalent to say that $ \O(I) \subseteq \overline{\O(I)} $.
	
	(c). By part (a), $ \An(I) \An(J) = \{0\} $ \ff $ \O(\An(I)) \cap \O(\An(J)) = \emptyset $; \ff, $ \left( X \setminus \O(J) \right)^\circ \cap \left( X \setminus \O(I) \right)^\circ = \emptyset $, By Proposition \ref{O and I}. It easy to see that, it is equivalent to say that $ \overline{\O(I) \cup \O(I)} = X $.
	
	(d ). By part (b), 
	\begin{align*}
	\overline{\O(I)} = \overline{\O(J)} \quad & \Leftrightarrow \quad \O(I) \subseteq \overline{\O(J)} \; \text{ and } \; \O(J) \subseteq \overline{\O(I)} \\
								& \Leftrightarrow \quad I \An(J) = \{ 0 \} \; \text{ and } \; J \An(I) = \{ 0 \} \\
								& \Leftrightarrow \quad \An(J) \subseteq \An(I) \; \text{ and } \; \An(I) \subseteq \An(J) \\
								& \Leftrightarrow \quad \An(I) = \An(J) \; . \qedhere
	\end{align*}
	
	(e). By part (a) and Proposition \ref{O and I},
	\begin{align*}
	I \I(U) = \{0\} \quad & \Leftrightarrow \quad \O(I) \cap \O(\I(U)) = \emptyset\\
						  & \Leftrightarrow \quad \O(I) \cap ( X \setminus U)^\circ = \emptyset \\
						  & \Leftrightarrow \quad \O(I) \cap X \setminus \overline{U} = \emptyset  \\
						  & \Leftrightarrow \quad \O(I) \subseteq \overline{U} \; . \qedhere
	\end{align*}
\end{proof}

By the above theorem two ideals $ I $ and $ J $ are adjacent \ff each maximal ideal of $ C(X) $ contains either $ I $ or $ J $. Now we can conclude the following corollary from Lemmas \ref{order} and \ref{open in image} and Theorem \ref{IJ=0 and G}.

\begin{Cor}
	Suppose that $I$  is a non-zero ideal of $C(X)$ and $ U \subseteq X $.
	\begin{itemize}
		\item[(a)] $I \in \AC$ \ff $ \overline{\O(I)} \neq X$.
		\item[(b)] $ \I(U) \in \AC $ \ff $ \overline{U}^{^\circ} \neq \emptyset $.
	\end{itemize}
	\label{element of AG}
\end{Cor}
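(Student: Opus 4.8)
The plan is to handle the two parts essentially independently, deriving (b) from (a) together with the description of $\O(\I(U))$ already available. Part (a) is a two-line chain of earlier results, and part (b) reduces to it after a single topological identity, so the bulk of the work is bookkeeping rather than a new idea; the only genuinely delicate point is a boundary case in (b).

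For part (a) I would use that $I$ is assumed non-zero, so $I \in \AC$ is equivalent to $\An(I) \neq \{0\}$. Rewriting $\An(I) = \I(\O(I))$ via Proposition \ref{O and I}(b) and then invoking Lemma \ref{order}(e) (which says $\I(\O(I)) = \{0\}$ exactly when $\O(I)$ is dense) gives $\An(I) \neq \{0\}$ iff $\O(I)$ is not dense, i.e. $\overline{\O(I)} \neq X$. Both implications are immediate. A route matching the lemmas cited for the corollary runs through Theorem \ref{IJ=0 and G}(a) and Lemma \ref{open in image}: a non-zero $I$ has $\An(I)\neq\{0\}$ iff some nonempty open set, realized as $\O(J)$ for a non-zero $J$, is disjoint from $\O(I)$, which happens precisely when $X\setminus\overline{\O(I)}\neq\emptyset$.

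For part (b) I would first compute $\O(\I(U)) = (X\setminus U)^\circ = X\setminus\overline{U}$ by Proposition \ref{O and I}(a). Assuming $\I(U)\neq\{0\}$, part (a) applied to $\I(U)$ yields $\I(U)\in\AC$ iff $\overline{\O(\I(U))}\neq X$, that is $\overline{X\setminus\overline{U}}\neq X$; the purely topological identity $\overline{X\setminus A}=X\setminus A^\circ$ with $A=\overline{U}$ turns this into $X\setminus\overline{U}^\circ\neq X$, i.e. $\overline{U}^\circ\neq\emptyset$. A cleaner variant avoiding (a) uses Theorem \ref{IJ=0 and G}(e): $\I(U)I=\{0\}$ iff $\O(I)\subseteq\overline{U}$, so $\An(\I(U))\neq\{0\}$ holds iff some non-zero $I$ satisfies $\O(I)\subseteq\overline{U}$; since by Lemma \ref{open in image} and Lemma \ref{order}(c) the sets $\O(I)$ for non-zero $I$ are exactly the nonempty open subsets of $X$, this is equivalent to the existence of a nonempty open set contained in $\overline{U}$, i.e. $\overline{U}^\circ\neq\emptyset$.

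The step I expect to be the main obstacle is the hypothesis $\I(U)\neq\{0\}$ needed for $\I(U)$ to be a vertex at all. By Lemma \ref{order}(e), $\I(U)=\{0\}$ precisely when $U$ is dense, in which case $\overline{U}=X$ and $\overline{U}^\circ=X\neq\emptyset$, yet $\I(U)\notin\AC$; thus the clean substantive equivalence being established is $\An(\I(U))\neq\{0\}\Leftrightarrow\overline{U}^\circ\neq\emptyset$. In writing the proof I would therefore either record explicitly that the interesting case is $\I(U)\neq\{0\}$ (equivalently $\overline{U}\neq X$), so that the annihilator condition and membership in $\AC$ coincide, or add the non-density hypothesis to the statement. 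Apart from this degenerate dense case, everything is a direct concatenation of Lemma \ref{order}, Lemma \ref{open in image}, Proposition \ref{O and I} and Theorem \ref{IJ=0 and G}.
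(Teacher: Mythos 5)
Your proof is correct and uses exactly the ingredients the paper itself cites (Lemma \ref{order}, Lemma \ref{open in image}, Proposition \ref{O and I}, Theorem \ref{IJ=0 and G}); the paper gives no written argument for this corollary, and either of your two routes for each part is a faithful expansion of the intended one. Part (a) is clean: for non-zero $I$, membership in $\AC$ reduces to $\An(I)=\I(\O(I))\neq\{0\}$, which by Lemma \ref{order}(e) is non-density of $\O(I)$.

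Your remark on part (b) is a genuine and correct catch, not a quibble: as literally stated, (b) is false when $U$ is dense but $\overline{U}^{\circ}\neq\emptyset$ (e.g.\ $U=\Q$ in $X=\R$, where $\overline{U}^{\circ}=\R\neq\emptyset$ yet $\I(U)=\{0\}\notin\AC$). What the cited results actually give is $\An(\I(U))\neq\{0\}\Leftrightarrow\overline{U}^{\circ}\neq\emptyset$, and the correct form of (b) is: $\I(U)\in\AC$ \ff $\overline{U}\neq X$ and $\overline{U}^{\circ}\neq\emptyset$ (the first condition being exactly $\I(U)\neq\{0\}$ by Lemma \ref{order}(e)). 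This is consistent with every later use of the corollary in the paper --- for instance in Lemma \ref{distance}(b) and Lemma \ref{gi}(a), where $\overline{H}\neq X$ is separately available --- so your suggested repair is the right one and nothing downstream is affected.
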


\begin{Cor}
	Suppose that $ I,J \in \AC $. $ I $ and $ J $ are orthogonal \ff $ \O(I) \cap \O(J) = \emptyset $ and $ \overline{ \O(I) \cup \O(J)} = X $.
\label{orthogonal}
\end{Cor}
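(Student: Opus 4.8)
The plan is to reduce the statement to a purely topological condition via Theorem~\ref{IJ=0 and G}(a), which already identifies adjacency of $I$ and $J$ with $\O(I)\cap\O(J)=\emptyset$. Thus the first clause of the corollary is exactly the adjacency requirement built into the definition of orthogonality, and everything comes down to characterizing when \emph{no} vertex is adjacent to both $I$ and $J$. Writing $W=\O(I)\cup\O(J)$, I would first observe that a vertex $K\in\AC$ is a common neighbour of $I$ and $J$ precisely when $\O(K)\cap\O(I)=\emptyset$ and $\O(K)\cap\O(J)=\emptyset$, that is, $\O(K)\cap W=\emptyset$, again by Theorem~\ref{IJ=0 and G}(a). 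So the second clause $\overline{W}=X$ must be shown equivalent to the nonexistence of such a $K$.

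For the easy direction, suppose $\overline{W}=X$. Any common neighbour $K$ would satisfy $\O(K)\subseteq X\setminus W$; since $\O(K)$ is open this forces $\O(K)\subseteq (X\setminus W)^\circ=X\setminus\overline{W}=\emptyset$, whence $K=\{0\}$ by Lemma~\ref{order}(c), contradicting $K\in\AC$. Hence no common neighbour exists, and $I,J$ are orthogonal once they are adjacent.

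Conversely, I would argue by contraposition: assuming $\overline{W}\neq X$, I will manufacture a common neighbour. The natural candidate is the nonempty open set $G=X\setminus\overline{W}$, together with an ideal $K$ satisfying $\O(K)=G$ supplied by Lemma~\ref{open in image}. By construction $\O(K)\cap W=\emptyset$, so $\O(K)\cap\O(I)=\O(K)\cap\O(J)=\emptyset$, which by Theorem~\ref{IJ=0 and G}(a) makes $K$ adjacent to both $I$ and $J$ (and distinct from them, since $\O(K)$ is disjoint from the nonempty sets $\O(I),\O(J)$), provided we know $K$ is genuinely a vertex.

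The main obstacle is precisely this last point: verifying $K\in\AC$, i.e. $\overline{\O(K)}\neq X$ by Corollary~\ref{element of AG}(a). Here I would use that $W\neq\emptyset$ (because $I,J$ are nonzero, so $\O(I),\O(J)\neq\emptyset$ by Lemma~\ref{order}(c)) together with the elementary topological fact that an open set disjoint from a set is disjoint from its closure: since $W$ is open and disjoint from $G=X\setminus\overline{W}$, it is disjoint from $\overline{G}$, giving $\overline{\O(K)}=\overline{G}\subseteq X\setminus W\subsetneq X$. This certifies $K\in\AC$ and completes the construction, so $I$ and $J$ fail to be orthogonal. Combining the two directions with the adjacency translation yields the stated equivalence.
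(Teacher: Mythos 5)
Your argument is correct and fills in exactly the verification the paper leaves to the reader: it translates adjacency and common neighbours into conditions on $\O(I)$, $\O(J)$ via Theorem~\ref{IJ=0 and G}(a), and handles the existence of a common neighbour by producing, via Lemma~\ref{open in image}, an ideal $K$ with $\O(K)=X\setminus\overline{\O(I)\cup\O(J)}$ and checking $K\in\AC$ with Corollary~\ref{element of AG}. This matches the paper's intended route (its proof is just a citation of the same two results), so there is nothing to add.
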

\begin{proof}
	It is easy to verify, by Theorem \ref{IJ=0 and G} and Corollary \ref{element of AG}.
\end{proof}

\begin{Pro}
	If the image of the mapping $\O$ is the family of all cozero sets of $X$, then $C(X)$ is an i.a.c ring.
\end{Pro}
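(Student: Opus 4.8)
The plan is to verify directly that $C(X)$ satisfies the infinite annihilating condition: given an arbitrary subset $S$ of $C(X)$, I must produce a single element $a \in C(X)$ with $\An(S) = \An(a)$. The first reduction I would make is to pass from $S$ to the ideal it generates. Since the condition ``$xs = 0$ for all $s \in S$'' is equivalent to ``$xg = 0$ for all $g \in \Ge{S}$'' (expanding $g = \sum h_i s_i$ in one direction and specializing to the generators in the other), we have $\An(S) = \An(\Ge{S})$. Writing $I = \Ge{S}$, it therefore suffices to realize $\An(I)$ as the annihilator of a principal ideal.

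Next I would translate the annihilator into the language of the maps $\O$ and $\I$. By Proposition \ref{O and I}(b) we have $\An(I) = \I(\O(I))$, and by Proposition \ref{G generated} the open set $\O(I)$ coincides with $\O(S)$. The crucial observation is that $\O(I)$ lies in the image of $\O$, so the hypothesis that this image is exactly the family of cozero sets of $X$ supplies an element $a \in C(X)$ with $\O(I) = \Co(a)$.

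Finally I would close the loop using Lemma \ref{order}(g), which gives $\O(\Ge{a}) = \Co(a) = \O(I)$. Applying Proposition \ref{O and I}(b) once more yields
\[ \An(a) = \An(\Ge{a}) = \I(\O(\Ge{a})) = \I(\O(I)) = \An(I) = \An(S), \]
where the identity $\An(a) = \An(\Ge{a})$ is again the principal-ideal reduction used above. Since $S$ was arbitrary, $C(X)$ is an i.a.c ring.

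I do not anticipate a genuine obstacle here: the argument is a short chain of identities already established in Section 2, and the hypothesis is invoked in exactly one place, namely to select a cozero representative $\Co(a)$ of the open set $\O(S)$. The only point demanding slight care is the two elementary instances of $\An(T) = \An(\Ge{T})$, which let me move freely between a set, the ideal it generates, and a single generating element; everything else is bookkeeping with $\O$ and $\I$.
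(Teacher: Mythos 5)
Your proposal is correct and follows essentially the same route as the paper: reduce $S$ to the ideal $I=\Ge{S}$, use the hypothesis together with Proposition \ref{G generated} to write $\O(I)=\O(S)=\Co(a)$ for some $a$, and conclude that $\An(S)=\An(a)$. The only cosmetic difference is that you pass through $\An(I)=\I(\O(I))$ (Proposition \ref{O and I}(b)), while the paper unwinds the same fact via the adjacency criterion $gI=\{0\}\Leftrightarrow\O(\Ge{g})\cap\O(I)=\emptyset$ of Theorem \ref{IJ=0 and G}.
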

\begin{proof}
	Suppose that $S$ is a subset of $X$ and set $I = \big< S \big>$. For some $f \in C(X)$, we have $\O(I) = \O(S) = \Co(f) $, by the assumption and Proposition \ref{G generated}. Thus by Lemma \ref{order}, Proposition \ref{G generated} and Theorem \ref{IJ=0 and G}, 
	\begin{align*}
	g \in \An(I) \quad & \Leftrightarrow \quad g I = \{ 0 \} \quad \Leftrightarrow \quad \O ( \big< g\big> ) \cap \O(I) = \emptyset \\
					   & \Leftrightarrow \quad \Co(g) \cap \Co(f) = \emptyset \quad \Leftrightarrow \quad gf = 0 \quad \Leftrightarrow \quad  g \in \An(f) 
 	\end{align*}
 	Hence  $ \An(S) = \An(I) = \An(f) $, i.e. $C(X)$ is an i.a.c. ring.
\end{proof}

\section{Radius of the graph}

In this section, some topological properties of $ X $ are linked to distance and eccentricity of vertices of $ \AGC $, then by these facts we study the radius of the graph. 

\begin{Lem}
	For any ideals $I$ and $J$ in $\AC$,
	\begin{itemize}
		\item[(a)] $ d(I,J) = 1 $ \ff $ \O(I) \cap \O(J) = \emptyset $.
		\item[(b)] $ d(I,J) = 2 $ \ff $ \O(I) \cap \O(J) \neq \emptyset $ and $ \overline{\O(I) \cup \O(J)} \neq X $.
		\item[(c)] $ d(I,J) = 3 $ \ff $ \O(I) \cap \O(J) \neq \emptyset $ and $ \overline{\O(I) \cup \O(J)} = X $.
	\end{itemize}
\label{distance}
\end{Lem}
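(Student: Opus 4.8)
The plan is to lean on the adjacency criterion of Theorem~\ref{IJ=0 and G}(a) — namely that $I$ and $J$ are adjacent \ff $\O(I)\cap\O(J)=\emptyset$ — together with the fact that $\AGC$ is connected with $\Di(\AGC)\le 3$, which follows from the propositions of the introduction (the first gives $\Di=1$ when $|X|=2$, while Proposition~\ref{diameter and clique=chi}(a) gives $\Di=3$ when $|X|\ge 3$). Thus $d(I,J)\in\{1,2,3\}$ for all vertices. Note also that the three right-hand conditions form a trichotomy: either $\O(I)\cap\O(J)=\emptyset$, or $\O(I)\cap\O(J)\neq\emptyset$ together with exactly one of $\overline{\O(I)\cup\O(J)}\neq X$ and $\overline{\O(I)\cup\O(J)}=X$. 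Hence it suffices to prove (a) and (b) directly and deduce (c) by elimination.

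Part (a) is immediate: $d(I,J)=1$ means $I$ and $J$ are adjacent, i.e. $IJ=\{0\}$, which by Theorem~\ref{IJ=0 and G}(a) is exactly $\O(I)\cap\O(J)=\emptyset$.

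For part (b), the point is that $d(I,J)=2$ holds precisely when $I,J$ are non-adjacent yet have a common neighbour $K\in\AC$; non-adjacency gives $\O(I)\cap\O(J)\neq\emptyset$ by (a). In the forward direction, such a $K$ satisfies $\O(K)\cap\O(I)=\emptyset=\O(K)\cap\O(J)$, so the open set $\O(K)$ — nonempty since $K\neq\{0\}$, by Lemma~\ref{order}(c) — lies in $X\setminus(\O(I)\cup\O(J))$; therefore $(X\setminus(\O(I)\cup\O(J)))^\circ=X\setminus\overline{\O(I)\cup\O(J)}\neq\emptyset$, i.e. $\overline{\O(I)\cup\O(J)}\neq X$. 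For the converse I would set $G=X\setminus\overline{\O(I)\cup\O(J)}$, a nonempty open set (by hypothesis) disjoint from both $\O(I)$ and $\O(J)$. By Lemma~\ref{open in image} there is an ideal $K$ with $\O(K)=G$; then $\O(K)\cap\O(I)=\emptyset=\O(K)\cap\O(J)$ yields $KI=KJ=\{0\}$ by Theorem~\ref{IJ=0 and G}(a). Since $X\setminus(\O(I)\cup\O(J))$ is closed and contains $G$, it contains $\overline{G}=\overline{\O(K)}$, so $\overline{\O(K)}\subseteq X\setminus(\O(I)\cup\O(J))\subsetneq X$, the last inclusion being strict because $\O(I)\neq\emptyset$; hence $K\in\AC$ by Corollary~\ref{element of AG}(a). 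As $\O(K)$ meets neither $\O(I)$ nor $\O(J)$ while both are nonempty, $K$ differs from $I$ and $J$, so $K$ is a genuine common neighbour and $d(I,J)=2$.

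Finally, (c) follows by elimination: given the trichotomy and $d(I,J)\in\{1,2,3\}$, the condition in (c) is the exact negation of those in (a) and (b), so it holds \ff $d(I,J)\neq 1$ and $d(I,J)\neq 2$, i.e. \ff $d(I,J)=3$. The main obstacle is the converse half of (b): one must \emph{manufacture} a legitimate vertex $K$ realizing the ``gap'' $X\setminus\overline{\O(I)\cup\O(J)}$ as its $\O$-image, which is precisely where Lemma~\ref{open in image} (to realize an arbitrary open set as some $\O(K)$) and Corollary~\ref{element of AG}(a) (to certify $K\in\AC$) do the essential work, with the strictness $\overline{\O(K)}\neq X$ resting on $I\neq\{0\}$.
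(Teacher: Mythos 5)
Your proposal is correct and follows essentially the same route as the paper: part (a) from Theorem~\ref{IJ=0 and G}(a), part (b) by extracting/manufacturing a common neighbour supported on the open set $X\setminus\overline{\O(I)\cup\O(J)}$, and part (c) by elimination using connectivity with diameter at most $3$. The only cosmetic difference is that the paper takes the witness to be $K=\I(\O(I)\cup\O(J))$ while you realize the same open set via Lemma~\ref{open in image}; since $\O(\I(H))=X\setminus\overline{H}$ by Proposition~\ref{O and I}(a), the two constructions coincide.
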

\begin{proof}
	(a). It is clear, by Theorem \ref{IJ=0 and G}.
	
	(b $\Leftarrow$). Since $I$ is not adjacent to $J$, $ \O(I) \cap \O(J) \neq \emptyset $, by Theorem \ref{IJ=0 and G}. By the assumption there is an ideal $ K $ in $ \AC $ such that $ K $ is adjacent to both ideals $ I $ and $ J $, so Lemma \ref{order}, concludes that $ \O(K) \neq \emptyset $ and also Theorem \ref{IJ=0 and G}, deduces that $ \O(I) \cap \O(K) = \emptyset $ and $ \O(J) \cap \O(K) = \emptyset $, thus $ \O(K) \cap \left( \O(I) \cup \O(J) \right) = \emptyset $, hence $ \overline{\O(I) \cup \O(J)} \neq X $.
	
	(b $\Leftarrow$). Theorem \ref{IJ=0 and G}, follows that $I$ is not adjacent to $J$. Set $ H = \O(I) \cup \O(J) $ and $ K = \I(H) $. Since $ \emptyset \neq H \subseteq \overline{H}^{^\circ} \neq \emptyset $, by Lemma \ref{open in image} and Corollary \ref{element of AG}, $ \I(H) \in \AC $. Since $ \O(I) , \O(J) \subseteq H \subseteq \overline{H} $, $ I K = J K = \{0\} $, by Theorem \ref{IJ=0 and G}. Hence $ K $ is adjacent to both ideals $ I $ and $ J $, thus $ d(I,J) =3 $.
	
	(c). It follows from (a), (b) and \cite[Theorem 2.1]{behboodi2011annihilating}. 
\end{proof}

\begin{Lem}
	Let $f \in C(X)$ and $I$ be an ideal of $C(X)$ and $p \in \O(I)$. If $ \Co(f) \subseteq \{ p \} $ and $p$ is an isolated point of $X$, then $ f \in I $.
\end{Lem}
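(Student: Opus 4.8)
The plan is to exhibit $f$ explicitly as an element of $I$ by writing it as a product $hg$ for a suitable $h \in C(X)$ and a witness $g \in I$. First I would dispose of the trivial case: if $f = 0$ then $f \in I$ automatically, so assume $f \neq 0$; then the hypothesis $\Co(f) \subseteq \{p\}$ forces $\Co(f) = \{p\}$, that is, $f(p) \neq 0$ while $f$ vanishes everywhere off $p$. Next, since $p \in \O(I) = \bigcup_{g \in I} \Co(g)$, there is some $g \in I$ with $p \in \Co(g)$, i.e. $g(p) \neq 0$; this $g$ is the multiplier witness I will use.

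The key topological input is that $p$ is isolated, so $\{p\}$ is open; as $X$ is Tychonoff (hence $T_1$) the singleton is also closed, and therefore $\{p\}$ is clopen, which makes its characteristic function continuous. I would then set
\[ h = \frac{f(p)}{g(p)}\,\chi_{\{p\}} \in C(X) \]
and verify $f = hg$ pointwise. At the point $p$ we get $(hg)(p) = \frac{f(p)}{g(p)}\,g(p) = f(p)$, and at every $x \neq p$ both sides vanish: $\chi_{\{p\}}(x) = 0$ gives $(hg)(x) = 0$, while $\Co(f) \subseteq \{p\}$ gives $f(x) = 0$. Hence $f = hg$, and since $g \in I$ and $I$ is an ideal, $f \in I$.

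The only point requiring genuine care — the \emph{obstacle}, such as it is — is the continuity of $h$, and this is precisely where both hypotheses are used together: the isolation of $p$ is what guarantees $\chi_{\{p\}} \in C(X)$, whereas the support condition $\Co(f) \subseteq \{p\}$ is what forces $hg$ and $f$ to agree away from $p$ without imposing any further constraint on the behaviour of $g$ there. Everything else reduces to the routine pointwise check above, so no additional machinery beyond Lemma \ref{order} and the definition of $\O(I)$ is needed.
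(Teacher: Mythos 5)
Your proof is correct and coincides with the paper's own argument: both take a witness $g \in I$ with $g(p) \neq 0$, define the same multiplier $h = \frac{f(p)}{g(p)}\chi_{\{p\}}$ (continuous because $p$ is isolated), and conclude $f = hg \in I$. The only difference is that you spell out the pointwise verification and the trivial case $f = 0$, which the paper leaves implicit.
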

\begin{proof}
	Since $ p \in \O(I) $, there is some $g \in I$ such that $p \in \Co(g)$. Set
	\[ h(x)  =  \begin{cases}
					\frac{f(p)}{g(p)} & x = p \\
					0 & x \neq p 
				\end{cases} \]
	Since $p$ is an isolated point, $h \in C(X)$. Now we have $ f = g h $ and therefore $ f \in I $.
\end{proof}

\begin{Pro}
	Suppose that $I$ is a non-zero annihilating ideal of $C(X)$. The following statements hold.
	\begin{itemize}
		\item[(a)] $\Ec(I) = 3$ \ff $\O(I)$ is not singleton. 
		\item[(b)] $\Ec(I) = 2$ \ff $\O(I)$ is singleton and $|X| > 2$.
		\item[(c)] $\Ec(I) = 1$ \ff $\O(I)$ is singleton and $|X| = 2$.
	\end{itemize}
\label{ecc}
\end{Pro}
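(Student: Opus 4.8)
The plan is to translate everything into the topology of the open set $\O(I)$ via Lemma \ref{distance} and the realization Lemma \ref{open in image}, and then to run a single partition argument. Since $|X|>1$, the diameter of $\AGC$ is either $1$ or $3$, so every vertex satisfies $\Ec(I)\in\{1,2,3\}$. The three right-hand conditions---``$\O(I)$ is not a singleton'', ``$\O(I)$ is a singleton and $|X|>2$'', ``$\O(I)$ is a singleton and $|X|=2$''---are mutually exclusive and jointly exhaustive for a non-zero annihilating ideal $I$ (if $\O(I)=\{p\}$ is a singleton then $\{p\}$ is open, so $p$ is isolated). Hence it suffices to prove the three forward implications ``(condition) $\Rightarrow$ ($\Ec$-value)''; the six biconditionals then follow automatically, since $\Ec(I)$ takes exactly one value. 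Throughout I use that $\O(I)\neq\emptyset$ (as $I\neq\{0\}$, by Lemma \ref{order}) and $\overline{\O(I)}\neq X$ (as $I\in\AC$, by Corollary \ref{element of AG}).

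The heart of the matter is the implication for (a): if $A:=\O(I)$ is not a singleton, I must exhibit $J\in\AC$ with $d(I,J)=3$. By Lemma \ref{distance}(c) this reduces to producing an open set $W$ with $W\neq\emptyset$, $\overline{W}\neq X$, $A\cap W\neq\emptyset$, and $\overline{A\cup W}=X$; indeed any such $W$ is realized as $W=\O(J)$ by Lemma \ref{open in image}, and $\overline{W}\neq X$ places $J$ in $\AC$ by Corollary \ref{element of AG}. To build $W$, I pick two distinct points $p,q\in A$ (possible precisely because $A$ is not a singleton---this is exactly where the hypothesis is used) and, by complete regularity of $X$, choose $h\in C(X)$ with $h(p)=0$ and $h(q)=1$; then $U_p:=\{x\in A:h(x)<\tfrac{1}{2}\}$ is an open neighbourhood of $p$ with $U_p\subseteq A$ and $q\notin\overline{U_p}$. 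I take $W:=X\setminus\overline{U_p}$. The verification is the main obstacle and runs as follows: $q\in A\cap W$ gives $A\cap W\neq\emptyset$; $U_p$ is a neighbourhood of $p$ disjoint from $W$, so $p\notin\overline{W}$ and $\overline{W}\neq X$; and since $U_p\subseteq A$ we get $X\setminus(A\cup W)=(X\setminus A)\cap\overline{U_p}\subseteq\overline{A}\setminus A$, the boundary of the open set $A$, which has empty interior, whence $A\cup W$ is dense and $\overline{A\cup W}=X$. This yields $d(I,J)=3$, and together with $\Ec(I)\leqslant 3$ it gives $\Ec(I)=3$.

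For the singleton cases, write $\O(I)=\{p\}$ with $p$ isolated. First, $\Ec(I)\leqslant 2$ always: if $J\in\AC$ is not adjacent to $I$, then $p\in\O(J)$ by Lemma \ref{distance}(a), so $\O(I)\cup\O(J)=\O(J)$ with $\overline{\O(J)}\neq X$, giving $d(I,J)=2$ by Lemma \ref{distance}(b). If moreover $|X|>2$, I produce a non-neighbour at distance $2$: choosing distinct $r,s\neq p$ and $g\in C(X)$ with $g(s)=0$, $g(r)=1$, the open set $W:=\{p\}\cup\{x:g(x)>\tfrac{1}{2}\}$ satisfies $p\in W$, $W\neq\{p\}$ and $s\notin\overline{W}$; realizing $W=\O(J)$ gives $J\in\AC$ with $\O(J)\neq\O(I)$, hence $J\neq I$ and $d(I,J)=2$, so $\Ec(I)=2$. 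If instead $|X|=2$, then $\Di(\AGC)=1$ by the $|X|=2$ characterization of the introduction, so $I$ is adjacent to every other vertex and $\Ec(I)=1$. Having established the three forward implications against a mutually exclusive, exhaustive partition of the hypotheses into the values $3,2,1$ of $\Ec(I)\in\{1,2,3\}$, all three stated equivalences follow at once.
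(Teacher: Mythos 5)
Your proof is correct, and it rests on the same two pillars as the paper's own argument: Lemma \ref{distance} to translate distances into conditions on $\O(\cdot)$, and Lemma \ref{open in image} together with Corollary \ref{element of AG} to realize suitable open sets as vertices of $\AGC$. The differences are in the logical organization and in the choice of witnesses. You prove only the three forward implications and recover all six biconditionals from the trichotomy of hypotheses together with the single-valuedness of $\Ec(I)$; the paper instead proves (a) and (c) as genuine biconditionals and obtains (b) by elimination. For ``$\O(I)$ not a singleton $\Rightarrow$ $\Ec(I)=3$'' the paper separates $p,q\in\O(I)$ by disjoint open sets $H,K\subseteq\O(I)$ and takes $\O(J)=H\cup\left(X\setminus\overline{\O(I)}\right)$, which makes the density of $\O(I)\cup\O(J)$ immediate; your $W=X\setminus\overline{U_p}$ works equally well but obliges you to invoke the fact that the boundary of an open set has empty interior. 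For the singleton cases the paper argues indirectly: assuming $\Ec(I)=1$ it realizes $X\setminus\{p\}$ as $\O(J)$ and applies part (a) to $J$ to force $|X|=2$, whereas you directly exhibit a vertex at distance $2$ when $|X|>2$; your observation that \emph{every} non-neighbour of $I$ is automatically at distance $2$ once $\O(I)=\{p\}$ is a clean shortcut the paper does not state explicitly. Your route is somewhat more uniform and self-contained; the paper's witness for (a) is slightly lighter on point-set topology.
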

\begin{proof}
	(a $\Rightarrow$). There is some $J \in \AC$ such that $d(I,J) = 3$. Lemma \ref{distance}, concludes that $\O(I) \cap \O(J) \neq \emptyset $ and $\overline{\O(I) \cup \O(J)} = X$. If $ \O(I)$ is singleton, then $ \O(I) \subseteq \O(J) $ and therefore $ \overline{\O(J)} = \overline{\O(I) \cup \O(J)} = X $, so $J \notin \AC $, by Corollary \ref{element of AG}, which is a contradiction.

	(a $\Leftarrow$). There are distinct points $p$ and $q$ in $\O(I)$, so there are disjoint open sets $H,K \subseteq \O(I)$ such that $p \in H$ and $q \in K$. By Lemma \ref{open in image}, there is some ideal $J$ such that $\O(J) = H \cup X \setminus \overline{\O(I)} $. Since $ q \notin \overline{\O(J)} $ and $ p \in \O(J) $, Lemma \ref{order} and Corollary \ref{element of AG}, concludes that $J \in \AC$. Then
	\[ H \subseteq \O(I) \cap \O(J) \quad \Rightarrow \quad \O(I) \cap \O(J) \neq \emptyset \]
	\[ \overline{\O(I) \cup \O(J)} \supseteq \overline{\O(I)} \cup \left( X \setminus \overline{\O(I)} \right) = X \quad \Rightarrow \quad \overline{\O(I) \cup \O(J)} = X \]
	Hence $d(I,J) = 3$, by Lemma \ref{distance}. Consequently, $\Ec(I) = 3$.

	(c $\Rightarrow$). There is some ideal $I \in \AC$ which is adjacent to any element of $\AC$. By (a), $\O(I)$ is singleton, thus there is some isolated point $p \in X$ such that $\O(I) =\{p\}$. Since $ \emptyset \neq X \setminus \{p\} $ is open, by Lemma \ref{open in image}, there is some ideal $J$, such that $\O(J) = X \setminus \{ p \}$. Since $ \O(J) \neq \emptyset $ and $\overline{\O(J)} =  X \setminus \{p\} \neq \emptyset$, $ J \in \AC$, by Lemma \ref{order} and Corollary \ref{element of AG}. Since $ \Ec(I) = 1 $, $\Ec(J) \leqslant 2$, so $\O(J)$ is singleton, by part (a), and therefore $|X|=2$.

	(c $\Leftarrow$). $C(X) \cong \R \oplus \R$, so $\AGC$ is a star graph, by \cite[Corollary 2.3]{behboodi2011annihilating}, since $ \AC $ has just two element, it follows that $ \Ec(I) = 1 $.
	
	(b). It concludes from (a) and (c).
\end{proof}

The following corollary is an immediate consequences of the above theorem. 

\begin{Cor}
	$|X| = 2$ \ff $\AGC$ is star.
\label{star}
\end{Cor}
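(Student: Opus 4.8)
The plan is to read the equivalence off directly from Proposition \ref{ecc}(c). By the paper's definition, $\AGC$ is a star graph precisely when it has a vertex adjacent to every other vertex; since the standing assumption $|X|>1$ guarantees that $\AC$ is nonempty and (as I note below) has more than one element, such a universal vertex $I$ is exactly a vertex with $\Ec(I)=1$: the condition $d(I,J)=1$ for all $J\neq I$ forces the maximum distance from $I$ to equal $1$. Hence proving the corollary reduces to showing that some $I\in\AC$ satisfies $\Ec(I)=1$ if and only if $|X|=2$, and by Proposition \ref{ecc}(c) this in turn reduces to producing an ideal $I$ with $\O(I)$ a singleton.

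For the direction $|X|=2 \Rightarrow \AGC$ is star, I would first observe that a two-point Tychonoff space is Hausdorff and finite, hence discrete, so every singleton is open. Choosing a point $p$, Lemma \ref{open in image} supplies an ideal $I$ with $\O(I)=\{p\}$, and since $\overline{\{p\}}=\{p\}\neq X$, Corollary \ref{element of AG}(a) gives $I\in\AC$. As $\O(I)$ is a singleton and $|X|=2$, Proposition \ref{ecc}(c) yields $\Ec(I)=1$, so $I$ is adjacent to the unique remaining vertex and $\AGC$ is star.

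For the converse, suppose $\AGC$ is star and let $I$ be a vertex adjacent to all others. Then $d(I,J)=1$ for every vertex $J\neq I$, whence $\Ec(I)=1$, and Proposition \ref{ecc}(c) immediately forces $|X|=2$.

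The only point requiring care — and it is bookkeeping rather than mathematics — is matching the paper's definition of \emph{star graph} (existence of a universal vertex) with the eccentricity statement of Proposition \ref{ecc}: I must ensure there is at least one further vertex so that a universal vertex has eccentricity exactly $1$ and not $0$. This is automatic, since when $|X|=2$ one has $C(X)\cong\R\oplus\R$, whose only annihilating ideals are the two coordinate ideals, so $\AC$ has precisely two elements. I therefore expect no real obstacle: the corollary is essentially a repackaging of Proposition \ref{ecc}(c) together with the elementary fact that a finite Hausdorff space is discrete.
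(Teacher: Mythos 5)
Your proposal is correct and follows essentially the same route as the paper, which derives the corollary directly from Proposition \ref{ecc}(c) (whose own proof already establishes that for $|X|=2$ the graph is a star with exactly two vertices, and conversely that a universal vertex forces $|X|=2$). You have merely filled in the bookkeeping that the paper leaves implicit.
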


Now we can determine the radius of the graph.

\begin{Thm}
	For any topological space $ X $, 
		\[  \Ra(\AGC) = \begin{cases}
		1 & \text{ if } |X| = 2 \\
		2 & \text{ if } |X| > 2 \text{ and } X \text{ has an islated point.} \\
		3 & \text{ if } |X| > 2 \text{ and } X \text{ has no any isloated point.}
		\end{cases} \]
\label{radius}
\end{Thm}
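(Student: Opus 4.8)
The plan is to read off the radius directly from the eccentricity computation already done in Proposition \ref{ecc}, using $\Ra(\AGC) = \min\{\Ec(I) : I \in \AC\}$ and splitting into the three cases of the statement. The one structural fact I will lean on throughout is that $\O(I)$ is by definition a union of cozerosets, hence an \emph{open} subset of $X$; consequently $\O(I)$ can be a singleton $\{p\}$ only when $\{p\}$ is open, i.e. only when $p$ is an isolated point of $X$. This is what ties the trichotomy ``$\Ec = 1,2,3$'' of Proposition \ref{ecc} to the topological trichotomy ``$|X|=2$ / isolated point / no isolated point''.

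First, the case $|X| = 2$. Here Corollary \ref{star} already tells us that $\AGC$ is a star, whose center has eccentricity $1$; equivalently, Proposition \ref{ecc}(c) produces a vertex of eccentricity $1$. Since no eccentricity can be smaller than $1$, this gives $\Ra(\AGC) = 1$. Next, assume $|X| > 2$. I first note that then \emph{no} vertex can have eccentricity $1$: by Proposition \ref{ecc}(c) that would force $|X| = 2$. Hence $\Ra(\AGC) \geqslant 2$ in both remaining cases, and it only remains to locate the minimum among $\{2,3\}$.

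Suppose $|X| > 2$ and $X$ has an isolated point $p$. Then $\{p\}$ is open, so by Lemma \ref{open in image} there is an ideal $I$ with $\O(I) = \{p\}$. Since $X$ is Tychonoff (hence $T_1$) and $|X| > 1$, we have $\overline{\O(I)} = \{p\} \neq X$, so $I \in \AC$ by Corollary \ref{element of AG}(a). As $\O(I)$ is a singleton and $|X| > 2$, Proposition \ref{ecc}(b) gives $\Ec(I) = 2$, and combined with the bound $\Ra \geqslant 2$ this yields $\Ra(\AGC) = 2$. Finally, suppose $|X| > 2$ and $X$ has no isolated point. Then for every vertex $I \in \AC$ the open set $\O(I)$ is nonempty by Lemma \ref{order}(c), and it cannot be a singleton, since a singleton open set would be an isolated point. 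Thus $\O(I)$ is never singleton, and Proposition \ref{ecc}(a) gives $\Ec(I) = 3$ for \emph{every} vertex; therefore $\Ra(\AGC) = 3$.

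The only step needing genuine care — the main obstacle, such as it is — is the existence argument in the isolated-point case: one must exhibit an honest vertex of $\AC$ whose image under $\O$ is exactly a singleton, which is precisely where Lemma \ref{open in image} (realizing any open set as some $\O(I)$) together with the membership criterion of Corollary \ref{element of AG}(a) are invoked. The remaining cases reduce to the observation that openness of $\O(I)$ forbids singleton images in the absence of isolated points, after which everything is a direct translation through Proposition \ref{ecc}.
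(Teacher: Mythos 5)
Your proof is correct and follows exactly the route the paper intends: the paper's own proof is the one-line remark that the theorem is a ``straight consequence of Lemma \ref{open in image} and Proposition \ref{ecc}'', and you have simply spelled out that deduction, including the key observation that $\O(I)$ is open and hence can only be a singleton at an isolated point.
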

\begin{proof}
	It is straight consequence of Lemma \ref{open in image} and Proposition \ref{ecc}.
\end{proof}

\section{Girth of the graph}

In this section, first we correspond an equivalent topological property to leaf vertices, then we show that if $ \AGC $ has a cycle then $ \Gi\AGC=3 $. Finally we try to associate the graph properties of $\AGC$, the ring properties of $C(X)$ and the topological properties of $X$.

\begin{Lem}
	Suppose that $ Y $ is a clopen subset of $ X $. For each ideal $ I $ of $ C(X) $, there are ideals $ I_1 $ and $ I_2 $ of $ C(X) $ such that $ I = I_1 \oplus I_2 $ and $ I_1 $ and $ I_2 $ are ideals of $ M_Y \cong C(X \setminus Y) $ and $ M_{ X \setminus Y} \cong C(Y) $, respectively.
\label{sum of ideals}
\end{Lem}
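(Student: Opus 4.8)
The plan is to exploit the idempotent that a clopen set provides. Since $Y$ is clopen in $X$, its characteristic function $e$ (equal to $1$ on $Y$ and $0$ on $X\setminus Y$) is continuous, so $e\in C(X)$ and $e^2=e$; likewise $1-e$ is the characteristic function of $X\setminus Y$. First I would reinterpret the two ambient rings in the statement as the principal ideals cut out by these idempotents. A function lies in $M_Y$ exactly when it vanishes on $Y$, which happens precisely when it is unchanged under multiplication by $1-e$; hence $M_Y=(1-e)C(X)$, and symmetrically $M_{X\setminus Y}=eC(X)$. The isomorphisms $M_Y\cong C(X\setminus Y)$ and $M_{X\setminus Y}\cong C(Y)$ are the restriction maps, which are bijective because each continuous function on a clopen piece extends by $0$ to a continuous function on $X$.

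Next I would define the two summands as $I_1=(1-e)I$ and $I_2=eI$. Because $I$ is an ideal, both are subsets of $I$ lying in $M_Y$ and $M_{X\setminus Y}$ respectively, and for every $f\in I$ the identity $f=(1-e)f+ef$ exhibits $f$ as a sum of an element of $I_1$ and an element of $I_2$; thus $I=I_1+I_2$. The sum is direct: any $g\in I_1\cap I_2$ vanishes on $Y$ (being in $M_Y$) and on $X\setminus Y$ (being in $M_{X\setminus Y}$), so $g=0$ on $X$ and $I_1\cap I_2=\{0\}$. This gives $I=I_1\oplus I_2$.

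It then remains to verify that $I_1$ is an ideal of $M_Y$ and $I_2$ an ideal of $M_{X\setminus Y}$, and here the \emph{idempotency} is the key point. For $h\in M_Y=(1-e)C(X)$ and $g=(1-e)f\in I_1$ with $f\in I$, the relation $(1-e)^2=1-e$ gives $hg=(1-e)(hf)$; since $hf\in I$, this lands in $(1-e)I=I_1$, and closure under addition is immediate, so $I_1$ is an ideal of $M_Y$. The verification for $I_2$ is identical with $e$ replacing $1-e$, and transporting along the two isomorphisms yields ideals of $C(X\setminus Y)$ and $C(Y)$. This is essentially the standard decomposition of an ideal in the product ring $C(X)\cong C(Y)\times C(X\setminus Y)$; the only thing requiring care is bookkeeping—checking the ideal property relative to $M_Y$ rather than to all of $C(X)$, which is exactly where $(1-e)^2=1-e$ is used, and keeping straight which idempotent produces which factor.
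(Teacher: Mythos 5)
Your proof is correct and follows the same route as the paper, which simply invokes the decomposition $C(X)\cong C(Y)\oplus C(X\setminus Y)$ induced by the clopen set and calls the rest straightforward; you have supplied exactly the details (the idempotent $e$, the identifications $M_Y=(1-e)C(X)$ and $M_{X\setminus Y}=eC(X)$, and the verification that $(1-e)I$ and $eI$ are ideals of the respective factors) that the paper leaves implicit.
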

\begin{proof}
	By this fact that since $ Y $ is clopen, $ C(X) \cong C(Y) \oplus C(X \setminus Y) $, it is straightforward.
\end{proof}

\begin{Pro}
	Let $ I \in \AC $. $ X \setminus \overline{\O(I)} $ is singleton \ff $ I $ is a leaf vertex.
\label{leaf vertex}
\end{Pro}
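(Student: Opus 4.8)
The plan is to translate the graph-theoretic notion of a leaf vertex into a statement about the open set $ W := X \setminus \overline{\O(I)} $, which is nonempty (as $ I \in \AC $, by Corollary \ref{element of AG}) and open. By Theorem \ref{IJ=0 and G}(a) an ideal $ J $ is adjacent to $ I $ exactly when $ \O(I) \cap \O(J) = \emptyset $; since $ \O(J) $ is open this is equivalent to $ \O(J) \subseteq ( X \setminus \O(I) )^\circ = W $. Thus the neighbours of $ I $ are precisely the vertices $ J \in \AC $ with $ \emptyset \neq \O(J) \subseteq W $ (nonemptiness because $ J \neq \{0\} $, by Lemma \ref{order}(c); and $ J \neq I $ holds automatically, as $ \O(J) $ is disjoint from the nonempty set $ \O(I) $). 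So ``$ I $ is a leaf'' becomes ``there is exactly one vertex $ J $ with $ \emptyset \neq \O(J) \subseteq W $'', and the proposition reduces to counting the fibres of $ \O $ that lie inside $ W $.

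For the direction $ |W| \geqslant 2 \Rightarrow I $ is not a leaf (the contrapositive of one implication) I would produce two distinct neighbours. Choosing $ p \neq q $ in $ W $ and using that $ X $ is Hausdorff, I get disjoint nonempty open sets $ G_1, G_2 \subseteq W $. By Lemma \ref{open in image} there are ideals $ J_1, J_2 $ with $ \O(J_i) = G_i $; they lie in $ \AC $ because each $ G_i $ is disjoint from the nonempty open set $ \O(I) $, hence not dense, so $ \overline{\O(J_i)} \neq X $ (Corollary \ref{element of AG}). Since $ \O(J_1) \neq \O(J_2) $ the ideals are distinct, and both are adjacent to $ I $; thus $ I $ has at least two neighbours.

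For the converse $ W = \{p\} \Rightarrow I $ is a leaf, note that $ p $ is isolated, being an open singleton. Here the only possible image is $ \O(J) = \{p\} $, and the crux is that the fibre of $ \O $ over $ \{p\} $ consists of a single ideal. I would argue: $ \O(J) = \{p\} $ forces every $ f \in J $ to satisfy $ \Co(f) \subseteq \{p\} $, i.e. $ f $ is a scalar multiple of the idempotent $ e $ with $ e(p) = 1 $ and $ e|_{X \setminus \{p\}} = 0 $ (continuous precisely because $ p $ is isolated); and $ p \in \O(J) $ forces some $ f \in J $ with $ f(p) \neq 0 $, whence $ e \in J $. Therefore $ J = \Ge{e} $ is the unique such ideal, and $ \Ge{e} \in \AC $ since $ \An(\Ge{e}) = \I(\{p\}) = M_p \neq \{0\} $ (Proposition \ref{O and I}, using $ |X| > 1 $). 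Hence $ I $ has exactly one neighbour and is a leaf.

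The main obstacle is exactly this last uniqueness step. Because $ \O $ is in general far from injective, merely knowing that every neighbour has image $ \{p\} $ does not by itself bound their number. The argument must exploit that an open singleton is isolated in order to pin the fibre of $ \O $ over $ \{p\} $ down to the single principal ideal generated by the point-mass idempotent; this isolatedness is precisely what makes $ e $ continuous and is the hinge of the whole proof.
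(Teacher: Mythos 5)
Your proof is correct and follows essentially the same route as the paper: adjacency to $I$ is translated via $\O$ into the condition $\emptyset \neq \O(J) \subseteq X \setminus \overline{\O(I)}$, two points in that complement yield two distinct neighbours, and a singleton $\{p\}$ forces the unique neighbour to be the ideal supported at the isolated point $p$. The only (minor) difference is in that last uniqueness step: the paper invokes the clopen decomposition $C(X) \cong C(Y) \oplus C(\{p\})$ of Lemma \ref{sum of ideals} to conclude the neighbour is $M_Y$, whereas you compute directly that any $J$ with $\O(J)=\{p\}$ equals $\Ge{e}$ for the point-mass idempotent $e$ --- which is the same ideal.
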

\begin{proof}
	$ \Rightarrow $). Suppose that $ X \setminus \overline{\O(I)} = \{p\} $. Since $ \{ p \}  $ is open, by Lemma \ref{open in image}, there is an ideal $ J $ such that $ \O(J) = \{ p \} $, then $ \overline{\O(J)} = \{p\} $, and therefore $ J \in \AC $, by Lemma \ref{order} and Corollary \ref{element of AG}. Also $ \O(I) \cap \O(J) = \emptyset $, so $ I $ is adjacent to $ J $, by Theorem \ref{IJ=0 and G}. Suppose that $ K $ is adjacent to $ I $ and $ Y  = \overline{\O(I)} $. Then $ \O(K) \cap \O(I) = \emptyset $, by Theorem \ref{IJ=0 and G}, thus $ \O(K) \subseteq X \setminus \overline{\O(I)} = \{ p \} $. By Lemma \ref{order}, $ \O(K) \neq \emptyset $, so $ \O(K) = \{ p \} $. Since $ \{ p \} $ is clopen, by Lemma \ref{sum of ideals}, it follows that there are ideals $ K_1 $ and $ K_2 $ of $ M_p \cong C(Y) $ and $ M_Y \cong C\left( \{ p \} \right) \cong \R $, respectively, such that $ K = K_1 \oplus K_2 $. If $ K_1 \neq \{ 0 \} $, then $ 0 \neq f \in K_1 \subseteq K $ exists, so there is some $ q \in Y $ such that $ f(q) \neq 0 $, thus $ p \neq q \in \O(K) $, which is a contradiction. Hence $ K_1 = \{ 0 \} $, since $ K \neq \{ 0 \} $, it follows that $ K_2 = M_Y $, thus $ K = M_Y $, and this completes the proof.
	
	$ \Leftarrow $). Suppose that $ X \setminus \overline{\O(I)} $ is not singleton, so distinct points $ p , q $ in $ X \setminus \overline{\O(I)} $ exist. Using Lemma \ref{order} and Corollary \ref{element of AG}, it is easily to verify that there are disjoint open sets $ H_1 $ and $ H_2 $ containing $ p $ and $ q $, respectively, which $ H_1 \cap \O(I) = H_2 \cap \O(I) = \emptyset $. Now Lemma \ref{open in image}, implies that there are ideals $ J_1 $ and $ J_2 $ such that $ \O(J_1) = H_1 $ and $ \O(J_2) = H_2 $, clearly $ J_1,J_2 \in \AC $. Then $ \O(I) \cap \O(J_1) = \O(I) \cap \O(J_2) = \emptyset $. So, by Theorem \ref{IJ=0 and G}, $ I $ is adjacent to both ideals $ J_1 $ and $ J_2 $.
\end{proof}

\begin{Lem}
	Suppose that $ I , J \in \AC $ and they are not leaf vertices. The following statements hold.
	\begin{itemize}
		\item[(a)] $ \O(I) \cap \O(J) = \emptyset $ and $ \overline{\O(I) \cup \O(J)} \neq X $ \ff $ \gi(I,J) = 3 $.
		\item[(b)] If $ \O(I) \cap \O(J) = \emptyset $ and $ \overline{\O(I) \cup \O(J)} = X $, then $ \gi(I,J) = 4 $.
		\item[(c)] If $ \O(I) \cap \O(J) \neq \emptyset $ and $ \overline{\O(I)} = \overline{\O(J)} $, then $ \gi(I,J) = 4 $.
		\item[(d)] Suppose that $ \O(I) \cap \O(J) \neq \emptyset $ and $ \overline{\O(I)} \neq \overline{\O(J)} $. Then $ X \setminus \overline{\O(I) \cup \O(J)} $ is not singleton \ff $\gi(I,J) = 4 $.
		\item[(e)] $ \O(I) \cap \O(J) \neq \emptyset $, $ \overline{\O(I)} \neq \overline{\O(J)} $ and $ X \setminus \overline{\O(I) \cup \O(J)} $ is singleton \ff $\gi(I,J) = 5 $.
	\end{itemize}
\label{gi}
\end{Lem}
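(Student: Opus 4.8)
The engine of the whole lemma is the dictionary between adjacency and the map $\O$: by Theorem \ref{IJ=0 and G}(a) two vertices $A,B$ are adjacent \ff $\O(A)\cap\O(B)=\emptyset$. Consequently a vertex $K$ is a common neighbour of $I$ and $J$ exactly when $\O(K)$ is a \emph{nonempty} open set disjoint from $\O(I)\cup\O(J)$, i.e. $\emptyset\neq\O(K)\subseteq X\setminus\overline{\O(I)\cup\O(J)}$. I would open the proof by recording this observation together with its sharp form. Writing $W=X\setminus\overline{\O(I)\cup\O(J)}$, common neighbours of $I$ and $J$ correspond, via Lemma \ref{open in image} and Corollary \ref{element of AG}, to nonempty open subsets of $W$; and in the borderline situation $W=\{p\}$ a single point, $p$ is isolated and the \emph{only} ideal $K$ with $\O(K)=\{p\}$ is $M_{X\setminus\{p\}}$ (since $\{f:\Co(f)\subseteq\{p\}\}=M_{X\setminus\{p\}}\cong\R$ is a minimal ideal), so $I$ and $J$ then possess a unique common neighbour. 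This uniqueness is precisely what separates $\gi=4$ from $\gi=5$, and I expect establishing it cleanly to be one of the two main obstacles.

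With this in hand the argument splits on whether $\O(I)\cap\O(J)=\emptyset$. In cases (a) and (b), $I$ and $J$ are adjacent, so a triangle through the edge $IJ$ exists \ff they have a common neighbour, i.e. \ff $W\neq\emptyset$, i.e. \ff $\overline{\O(I)\cup\O(J)}\neq X$; this is exactly (a). In case (b) we have $W=\emptyset$, hence no triangle and $\gi(I,J)\geq 4$, and I would exhibit a $4$-cycle $I$--$J$--$A$--$B$--$I$ by taking $A,B$ with $\O(A)$ a proper nonempty open subset of $\O(I)$ and $\O(B)$ a proper nonempty open subset of $\O(J)$. Disjointness of $\O(I),\O(J)$ makes all four required edges present, and non-leafness enters here: by Proposition \ref{leaf vertex} one checks that $\O(I),\O(J)$ each contain at least two points (were $\O(I)$ a singleton, then $\overline{\O(J)}=X\setminus\O(I)$ would force $J$ to be a leaf), so the needed proper open subsets exist and the four vertices are distinct.

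In cases (c), (d), (e) we have $\O(I)\cap\O(J)\neq\emptyset$, so $I$ and $J$ are non-adjacent and every cycle through them has length $\geq4$; moreover a $4$-cycle exists \ff they have two distinct common neighbours, which by the opening observation happens \ff $W$ contains two disjoint nonempty open sets, i.e. \ff $W$ has at least two points. Realizing two such subsets by Lemma \ref{open in image} produces the $4$-cycle $I$--$K_1$--$J$--$K_2$--$I$. Case (c) falls under this: there $\overline{\O(I)\cup\O(J)}=\overline{\O(I)}\neq X$ and $W=X\setminus\overline{\O(I)}$, which is nonempty and, by non-leafness, not a singleton, hence has at least two points, giving $\gi(I,J)=4$. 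Case (d) is then the remaining instance in which $W$ has at least two points (the reading of ``not singleton'' that makes the equivalence correct; the complementary possibility $\overline{\O(I)\cup\O(J)}=X$, i.e. $W=\emptyset$, is the distance-$3$ situation of Lemma \ref{distance}(c) and supports no common neighbour). When $W=\{p\}$ is a single point, case (e), the uniqueness statement forbids two common neighbours, so no $4$-cycle exists and $\gi(I,J)\geq5$.

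It then remains to build a $5$-cycle in case (e), the second delicate point. Using $\overline{\O(I)}\neq\overline{\O(J)}$, I would assume by symmetry a point of $\overline{\O(J)}\setminus\overline{\O(I)}$, which yields an open $G_1\subseteq\O(J)$ with $G_1\cap\O(I)=\emptyset$; and since $J$ is not a leaf, $X\setminus\overline{\O(J)}$ contains a point other than $p$, giving an open $G_2\subseteq X\setminus\overline{\O(J)}$ with $p\notin G_2$. Choosing ideals $A,B$ with $\O(A)=G_1,\ \O(B)=G_2$ (both in $\AC$ by Corollary \ref{element of AG}), one verifies the edges $A\sim I$, $A\sim B$, $B\sim J$ and that $I,A,B,J,K$ are five distinct vertices; here the hypothesis $\O(I)\cap\O(J)\neq\emptyset$ is exactly what guarantees $A\neq J$ and $B\neq I$. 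Then $I$--$A$--$B$--$J$--$K$--$I$ is a $5$-cycle, so $\gi(I,J)=5$. Finally the converse directions in (a), (d), (e) require no separate work: under the standing case hypotheses the listed conditions on $(\O(I),\O(J))$ are mutually exclusive, and since each has been shown to force a distinct value of $\gi(I,J)$, every stated equivalence follows at once.
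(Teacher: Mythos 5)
Your argument is correct, and although it runs on the same engine as the paper's --- Theorem \ref{IJ=0 and G} and Lemma \ref{open in image} translating everything into open sets --- both the organization and two of the cycle constructions are genuinely different. The paper builds the $4$-cycles in (b) and (c) out of $I,J,\An(I),\An(J)$; you instead work uniformly with the principle that common neighbours of $I$ and $J$ are exactly the ideals $K$ with $\emptyset\neq\O(K)\subseteq W=X\setminus\overline{\O(I)\cup\O(J)}$, plus the uniqueness of such a $K$ when $W=\{p\}$ (your minimal-ideal argument for $M_{X\setminus\{p\}}$ is a correct compression of the paper's unnumbered lemma on functions with $\Co(f)\subseteq\{p\}$ and of the argument inside Proposition \ref{leaf vertex}). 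This buys you something real in part (c): there $\overline{\O(I)}=\overline{\O(J)}$ forces $\An(I)=\An(J)$ by Theorem \ref{IJ=0 and G}(d), so the paper's four candidate vertices collapse to three and its cycle degenerates, whereas your two common neighbours drawn from $W=X\setminus\overline{\O(I)}$ (which has at least two points by non-leafness) give an honest $4$-cycle. Your construction in (b), via proper nonempty open subsets of $\O(I)$ and $\O(J)$, likewise sidesteps the vertex-distinctness issue the paper leaves unexamined (e.g.\ $J$ could equal $\An(I)$ there), and your observation that non-leafness forces $|\O(I)|\geqslant 2$ is exactly the justification needed for those subsets to exist. You are also right that in (d) ``not singleton'' must be read as ``has at least two points'': if $\O(I)\cap\O(J)\neq\emptyset$ and $W=\emptyset$, then $d(I,J)=3$ by Lemma \ref{distance}(c), there is no common neighbour, and $\gi(I,J)\geqslant 6$, so the literal reading would falsify the forward implication; the paper's own proof of (d) silently assumes the same reading you adopt. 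Your $5$-cycle in (e) and the closing by-elimination treatment of the converses match the paper's in substance.
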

\begin{proof}
	(a $\Rightarrow$). Set $ H = \O(I) \cup \O(J) $ and $ K = \I(H) $. Since $ \overline{H} \neq X  $ and $ \overline{H}^{^\circ} \neq \emptyset $, $ K \in \AC $, by Lemma \ref{order} and Corollary \ref{element of AG}. Since $ \O(I) , \O(J) \subseteq H \subseteq \overline{H} $, by Theorem \ref{IJ=0 and G}, $ K $ is adjacent to both ideals $ I $ and $ J $. By the assumption and Theorem \ref{IJ=0 and G}, $ I $ adjacent to $ J $, hence $ \gi(I,J) = 3 $.   

	(a $\Leftarrow$). By the assumption, $I$ is adjacent to $J$ and some $ K \in \AC $ exists such that $K$ is adjacent to both ideals $I$ and $J$, so $\O(I) \cap \O(J) = \emptyset $, $ \O(I) \cap \O(K) = \emptyset $ and $ \O(J) \cap \O(K) = \emptyset $, by Theorem \ref{IJ=0 and G}. Hence $ \left( \O(I) \cup \O(J) \right) \cap \O(K) = \emptyset $. Since $K \neq \{0\} $, $ \O(K) \neq \emptyset$, by Lemma \ref{order}, and therefore $ \overline{\O(I) \cup \O(J)} \neq X $.
	
	(b). The assumption and  part (a) imply that $ \gi(I,J) \geqslant 4  $ and Theorem \ref{IJ=0 and G}, concludes that $I$ is adjacent to $ J $ and $ \An(I) $ is adjacent to $ \An(J) $. Since $ I $ and $ J $ is adjacent to $ \An(I) $ and $ \An(J) $, respectively, the proof is complete.
	
	(c). We can conclude from the assumption and part (a), that $ \gi(I,J) \geqslant 4  $. Since $ \O(I) \subseteq \overline{\O(J)} $ and $ \O(J) \subseteq \overline{\O(J)} $, by Theorem \ref{IJ=0 and G}, $ I $ and $ J $ are adjacent to $ \An(J) $ and $ \An(I)  $, respectively. Since $ I $ and $ J $ are adjacent to $ \An(I) $ and $ \An(J) $, respectively, the proof is complete.
	
	(d $ \Rightarrow $). It easy to see that there are two distinct open sets $ H_1 $ and $ H_2 $ such that $ H_1 \cap \O(I) = H_1 \cap \O(J) = H_2 \cap \O(I) = H_2 \cap \O(J) =\emptyset  $. Then, by Lemma \ref{open in image}, there are two ideals $ K_1 $ and $ K_2 $ such that $ \O(K_1) = H_1 $ and $ \O(K_2) = H_2 $, it is clear that $K_1,K_2 \in \AC$, by Lemma \ref{order} and Corollary \ref{element of AG}. Now Theorem \ref{IJ=0 and G}, concludes that both vertices $ I $ and $ J $ are adjacent to both vertices $ K_1 $ and $ K_2 $, thus $ \gi(I,J) = 4 $, by part (a).
	
	(d $ \Leftarrow $). By Theorem \ref{IJ=0 and G}, $ I $ is adjacent to $ J $, since $ \gi(I,J) = 4 $, it follows that there are distinct vertices $ K_1 $ and $ K_2 $ which are adjacent to both vertices $ I $ and $ J $, so $ I + J $ is adjacent to both vertices $ K_1 $ and $ K_2 $. Now Propositions \ref{intersection and union in O and I} and \ref{leaf vertex}, conclude that $ X \setminus \overline{\O(I) \cup \O(J)} = X \setminus \overline{\O(I+J)} $ is not singleton.
	
	(e $ \Rightarrow $). By parts (a) and (d), $ \gi(I,J) \geqslant 5 $. If $ \O(I) \subseteq \overline{\O(J)}  $, then $ \overline{\O(I)} \subseteq \overline{\O(J)} $, so $ X \setminus \overline{\O(I) \cup \O(J)} = X \setminus \overline{ \O(J)} $ and therefore $ X \setminus \overline{\O(J)}  $ is singleton, by the assumption. Now Proposition \ref{leaf vertex}, concludes that $ J $ is a leaf vertex, which contradicts the assumption, so $ \O(I) \not \subseteq \overline{\O(J)} $, one can show similarly that $ \O(J) \not \subseteq \overline{\O(I)} $, so $ H_1 = \O(I) \setminus \overline{\O(J)} $ and $ H_2 = \O(J) \setminus \overline{\O(I)} $ are nonempty open sets, thus, Lemma \ref{open in image}, implies that there are ideals $ K_1 $ and $ K_2 $ such that $ \O(K_1) = H_1 $ and $ \O(K_2) = H_2 $, it is evident that $ K_1,K_2 \in \AC $, by Lemma \ref{order} and Corollary \ref{element of AG}. Since $ X \setminus \overline{\O(I) \cup \O(J)} $ is nonempty open set, there is an ideal $ K_3 $ such that $ \O(K_3) = X \setminus \overline{\O(I) \cup \O(J)} $, it is clear that $ K_3 \in \AC $, by Lemma \ref{order} and Corollary \ref{element of AG}. Then 
	\begin{align*}
	\O(I) \cap \O(K_2) & = \O(K_2) \cap \O(K_1) = \O(K_1) \cap \O(J) \\
					   & = \O(J) \cap \O(K_3) = \O(K_3) \cap \O(I) = \emptyset 
	\end{align*}
	so $ \gi(I,J) = 5 $.
	
	(e $ \Leftarrow $). It is clear, by Parts (a)-(d).
\end{proof}

It is clear that if $ |X| = 2 $, then $ \AGC $ has no any cycle. In the following theorem we show that if $ \AGC $ has an cycle then the girth of the graph is 3.

\begin{Thm}
	If $ |X| > 2 $, then $ \Gi \AGC = 3 $.
\label{girth}
\end{Thm}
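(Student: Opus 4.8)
The plan is to exhibit an explicit triangle in $\AGC$, since the girth of a simple graph is at least $3$ and the existence of a single $3$-cycle forces $\Gi \AGC = 3$. By Theorem \ref{IJ=0 and G}(a), three vertices $I,J,K \in \AC$ form a triangle precisely when $\O(I)$, $\O(J)$ and $\O(K)$ are pairwise disjoint, so the entire problem reduces to producing three suitable pairwise disjoint open sets and lifting them back to ideals via the dictionary developed in Section~2.

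First I would reduce the statement to a purely topological one: it suffices to find three pairwise disjoint nonempty open subsets $G_1,G_2,G_3$ of $X$. Since $X$ is Tychonoff it is Hausdorff, and because $|X| > 2$ I may choose three distinct points $p_1,p_2,p_3$. Applying the Hausdorff property to each of the three pairs and intersecting the resulting neighbourhoods gives pairwise disjoint open sets $G_i \ni p_i$; this is the only genuinely topological step and it is entirely elementary.

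Next I would lift these open sets to ideals. By Lemma \ref{open in image} there exist ideals $I_1,I_2,I_3$ with $\O(I_i)=G_i$. Each $I_i$ lies in $\AC$: because the $G_i$ are pairwise disjoint open sets, for $j \neq i$ we have $\overline{G_i} \subseteq X \setminus G_j \neq X$, so $\overline{\O(I_i)} \neq X$ and Corollary \ref{element of AG}(a) applies. The three vertices are pairwise distinct since their images under $\O$ are distinct nonempty sets (using Lemma \ref{order}), and the pairwise disjointness $\O(I_i) \cap \O(I_j) = \emptyset$ yields pairwise adjacency by Theorem \ref{IJ=0 and G}(a). Hence $\{I_1,I_2,I_3\}$ is a triangle, and combined with $\Gi \AGC \geqslant 3$ this gives $\Gi \AGC = 3$.

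The step requiring the most care is not a real obstacle but rather bookkeeping: confirming that each constructed ideal is a genuine vertex (nonzero with nonzero annihilator) and that the three vertices are distinct, both of which follow from Lemma \ref{order} and Corollary \ref{element of AG} once the disjoint open sets are in hand. No deeper structure of $C(X)$ is needed beyond the correspondence between ideals and their images under $\O$ already established.
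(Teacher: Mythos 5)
Your proposal is correct and follows exactly the paper's own argument: produce three pairwise disjoint nonempty open sets, lift them to ideals via Lemma \ref{open in image}, verify membership in $\AC$ by Corollary \ref{element of AG}, and get pairwise adjacency from Theorem \ref{IJ=0 and G}(a). You merely spell out the Hausdorff separation step and the closure bookkeeping that the paper dismisses as "easy to see," which is a welcome addition but not a different route.
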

\begin{proof}
	It easy to see that there are mutually disjoint nonempty open sets $ G_1 $, $ G_2 $ and $ G_3 $. By Lemma \ref{open in image}, there are ideals $ I_1 $, $ I_2 $ and $ I_3 $, such that $ \O(I_1) = G_1 $, $ \O(I_2) = G_2 $ and $ \O(I_3) = G_3 $, evidently, $ I_1,I_2,I_3 \in \AC $, by Lemma \ref{order} and Corollary \ref{element of AG}. By Theorem \ref{IJ=0 and G}, $ I_1 $ is adjacent to $ I_2 $, $ I_2 $ is adjacent to $ I_3 $ and $ I_3 $ is adjacent to $ I_1 $, hence $ \Gi \AGC =3 $.
\end{proof}

\begin{Thm}
	The following statements are equivalent.
	\begin{itemize}
		\item[(a)] $ X $ has an isolated point.
		\item[(b)] $ \R $ is a direct summand of $ C(X) $.
		\item[(c)] $ \AGC $ has a leaf vertex.
		\item[(d)] $ \AGC $ is not triangulated.
	\end{itemize}
\end{Thm}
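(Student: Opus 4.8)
The plan is to prove the four statements equivalent by establishing (a) $\Leftrightarrow$ (b), (a) $\Leftrightarrow$ (c), (c) $\Rightarrow$ (d), and finally (d) $\Rightarrow$ (a) in contrapositive form; chained together these close the cycle and give the full equivalence. The engine throughout will be the dictionary between ideals and open sets developed in Section 2, especially Theorem \ref{IJ=0 and G}, Corollary \ref{element of AG}, Lemma \ref{open in image} and Proposition \ref{leaf vertex}.

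For (a) $\Leftrightarrow$ (b) I would argue through idempotents. Since $X$ is Tychonoff, hence $T_1$, a point $p$ is isolated exactly when $\{p\}$ is clopen, and then $C(X) \cong C(\{p\}) \oplus C(X \setminus \{p\}) \cong \R \oplus C(X\setminus\{p\})$, giving (b). Conversely a ring decomposition $C(X) \cong \R \oplus A$ yields a nontrivial idempotent $e \in C(X)$, necessarily the characteristic function of a clopen set $Y$, with $e\,C(X) \cong C(Y) \cong \R$; as $C(Y) \cong \R$ forces $Y$ to be a single point, that point is isolated. For (a) $\Leftrightarrow$ (c), the tool is Proposition \ref{leaf vertex}: if $p$ is isolated then $G = X \setminus \{p\}$ is open and closed, so by Lemma \ref{open in image} pick $I$ with $\O(I) = G$, whence $X \setminus \overline{\O(I)} = \{p\}$ is a singleton and $\overline{\O(I)} \neq X$, so $I \in \AC$ is a leaf vertex. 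Conversely, if $I$ is a leaf vertex then $X \setminus \overline{\O(I)}$ is a singleton by Proposition \ref{leaf vertex}, and being the complement of a closed set it is open, so its unique point is isolated.

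The implication (c) $\Rightarrow$ (d) is immediate and requires no topology: a leaf vertex is adjacent to exactly one other vertex, whereas any vertex lying on a triangle has at least two neighbours, so a leaf vertex lies on no triangle and $\AGC$ is not triangulated.

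The substance of the theorem is (d) $\Rightarrow$ (a), which I would prove contrapositively: assuming $X$ has no isolated point I show every vertex lies on a triangle. (Since a $T_1$ space with no isolated point is infinite, we are automatically in the case $|X| > 2$.) Fix $I \in \AC$; by Corollary \ref{element of AG} the set $U = X \setminus \overline{\O(I)}$ is a nonempty open set, and it cannot be a singleton, for a one-point complement of a closed set would be an isolated point. Choosing distinct $p,q \in U$ and using the Hausdorff property, I separate them by disjoint open sets and intersect with $U$ to get disjoint nonempty open $H_1, H_2 \subseteq U$; Lemma \ref{open in image} then supplies ideals $J,K$ with $\O(J)=H_1$, $\O(K)=H_2$. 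Each $H_i$ is disjoint from the nonempty open set $\O(I)$, so $\overline{H_i} \neq X$ and thus $J,K \in \AC$ by Corollary \ref{element of AG}. Since $\O(I) \cap \O(J) = \O(I) \cap \O(K) = \O(J) \cap \O(K) = \emptyset$, Theorem \ref{IJ=0 and G} makes $I,J,K$ pairwise adjacent, a triangle through $I$. The delicate points are exactly the two places where the no-isolated-point hypothesis is spent: guaranteeing that $U$ contains two distinct points, and verifying that the constructed $J,K$ genuinely belong to $\AC$; everything else is a routine application of the Section 2 dictionary.
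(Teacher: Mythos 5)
Your proposal is correct and follows essentially the same route as the paper: (a)\,$\Leftrightarrow$\,(b) via the clopen-singleton/idempotent observation, (a)\,$\Leftrightarrow$\,(c) via Proposition \ref{leaf vertex}, (c)\,$\Rightarrow$\,(d) trivially, and (d)\,$\Rightarrow$\,(a) contrapositively by separating two points of the nonempty open set $X \setminus \overline{\O(I)}$ into disjoint open sets and lifting them to a triangle through Lemma \ref{open in image} and Theorem \ref{IJ=0 and G}. You merely spell out a few steps the paper leaves implicit (why $X\setminus\overline{\O(I)}$ cannot be a singleton, and why the constructed ideals lie in $\AC$), which is fine.
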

\begin{proof}
	(a $\Leftrightarrow$ b) and (c $ \Rightarrow $ d) are clear and (a $ \Leftrightarrow $ c) follows from Proposition \ref{leaf vertex}.
	
	(d $\Rightarrow$ a) Suppose that $ X $ has no any isolated point and $ I \in \AC $. Then $ X \setminus \overline{\O(I)} $ is not singleton, so it has two distinct points $ p $ and $ q $, so there are disjoint open sets $ G_1 $ and $ G_2 $, such that $ G_1 \cap \O(I) = G_2 \cap \O(I) = \emptyset $. By Lemma \ref{open in image}, there are $ J,K \in \AC $, such that $ \O(J) = G_1 $ and $ \O(K) = G_2 $. Thus $ I $ is adjacent to $ J $, $ J $ is adjacent to $ K $ and $ K $ is adjacent $ I $. Consequently, $ \AGC $ is triangulated.
\end{proof}

\section{Dominating number}

In the last section, we give an upper bound and a lower bound for dominating number of the graph by topological notions, then the chromatic number and the clique number of the graph are studied. Finally, we introduce the disjoint open set graph on a topology and give the radius, dominating number, diameter and the girth of this graph.

\begin{Thm}
	$ c(X) \leqslant \dt(\AGC) \leqslant w(X) $, for each topological space $ X $.
\label{cellularity < dominating}
\end{Thm}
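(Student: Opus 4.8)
The plan is to establish the two inequalities separately, using the dictionary between ideals and open sets provided by $\O$ and $\I$. The key translation is that by Theorem~\ref{IJ=0 and G}(a), a dominating set $D \subseteq \AC$ must have the property that every vertex $I$ not in $D$ is annihilated by some $J \in D$, i.e. $\O(I) \cap \O(J) = \emptyset$. I would recast domination entirely in terms of the open sets $\O(J)$ for $J \in D$.

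For the lower bound $c(X) \leqslant \dt(\AGC)$, I would argue by contrapositive on cardinalities: suppose $D$ is a dominating set; I want to produce, from $D$, a pairwise-disjoint family of nonempty open sets whose cardinality is at most $|D|$, and show that $c(X)$ cannot exceed $\dt(\AGC)$. The natural move is to start with a pairwise-disjoint family $\{U_\alpha\}$ of nonempty open sets witnessing cellularity. For each $\alpha$, by Lemma~\ref{open in image} choose an ideal $I_\alpha$ with $\O(I_\alpha) = U_\alpha$; since each $\overline{U_\alpha}$ has nonempty interior, $I_\alpha \in \AC$ by Corollary~\ref{element of AG}. Now each $I_\alpha$ must be dominated by some $J \in D$ with $\O(J) \cap U_\alpha = \emptyset$. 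The idea is to show that a single $J \in D$ cannot dominate two of the $I_\alpha$'s unless... — here I would instead construct, for each $\alpha$, a vertex forcing a \emph{distinct} dominator. Concretely, I expect to enlarge the construction: pick $I_\alpha$ so that $\O(I_\alpha)$ meets every $U_\beta$ with $\beta \neq \alpha$ but the configuration forces the dominator of $I_\alpha$ to ``live inside'' $U_\alpha$, yielding an injection from the index set into $D$. This injectivity step is the crux and the main obstacle: I must engineer the $I_\alpha$ so that no element of $D$ can simultaneously dominate two of them, and verifying this requires careful use of the facts $\O(I) \cap \O(J) = \emptyset$ together with $\overline{\O(J)} \neq X$ (Corollary~\ref{element of AG}(a)) so that each dominator's ``footprint'' $\O(J)$ is pinned down to a specific cell.

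For the upper bound $\dt(\AGC) \leqslant w(X)$, I would exhibit an explicit dominating set of size at most $w(X)$. Fix a base $\B$ of $X$ with $|\B| = w(X)$; I may assume each member of $\B$ is a nonempty cozeroset-style open set, and for each basic open $B \in \B$ use Lemma~\ref{open in image} to pick an ideal $J_B$ with $\O(J_B) = B$. The candidate dominating set is $D = \{ J_B : B \in \B,\ \overline{B} \neq X \}$ (the condition $\overline{B}\neq X$ ensures $J_B \in \AC$ via Corollary~\ref{element of AG}). Given an arbitrary vertex $I \in \AC$, I have $\overline{\O(I)} \neq X$, so the open set $X \setminus \overline{\O(I)}$ is nonempty and hence contains some basic open $B$; then $B \cap \O(I) = \emptyset$, i.e. $\O(J_B) \cap \O(I) = \emptyset$, so by Theorem~\ref{IJ=0 and G}(a) $J_B$ annihilates $I$ and thus dominates it. This shows $D$ is dominating with $|D| \leqslant |\B| = w(X)$, giving $\dt(\AGC) \leqslant w(X)$. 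The only delicate point here is ensuring the chosen $B$ can be taken so that $J_B \in \AC$, which follows because $X \setminus \overline{\O(I)}$ is a nonempty open set that, being a proper subset, guarantees $\overline{B}\neq X$ for a small enough basic $B$.
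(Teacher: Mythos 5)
Your upper bound $\dt(\AGC)\leqslant w(X)$ is correct and is essentially the paper's argument: from a base $\B$ you produce, via Lemma~\ref{open in image}, a family of at most $|\B|$ vertices whose $\O$-images can be placed inside $X\setminus\overline{\O(I)}$ for any given vertex $I$, and your claim that $\overline{B}\neq X$ for every basic $B\subseteq X\setminus\overline{\O(I)}$ is sound, since the nonempty open set $\O(I)$ witnesses non-density of $B$. (The paper uses principal ideals $\Ge{f_{_B}}$ with $\Co(f_{_B})\subseteq B$ instead of ideals with $\O(J_B)=B$ exactly; the difference is immaterial.)

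The lower bound $c(X)\leqslant\dt(\AGC)$, however, contains a genuine gap. You correctly diagnose the problem --- a single $J\in D$ with $\O(J)$ disjoint from all the cells $U_\alpha$ would dominate every vertex $I_\alpha$ with $\O(I_\alpha)=U_\alpha$, so no injection into $D$ results --- and you announce that you will ``engineer'' the $I_\alpha$ to force distinct dominators, but you never carry this out; you explicitly label the injectivity step ``the crux and the main obstacle.'' The two missing ingredients, which are exactly what the paper supplies, are: (i) first normalize the cellular family so that $\bigcup_\alpha U_\alpha$ is dense in $X$, by adjoining $X\setminus\overline{\bigcup_\alpha U_\alpha}$ as one more cell if necessary (this cannot decrease the cardinality); and (ii) replace $I_\alpha$ by the vertex $\sum_{\beta\neq\alpha}I_\beta$, whose $\O$-image equals $\bigcup_{\beta\neq\alpha}U_\beta$ by Proposition~\ref{intersection and union in O and I}(a) and which lies in $\AC$ because its closure misses the nonempty open set $U_\alpha$. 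Any dominator $J_\alpha\in D$ of this vertex then has $\O(J_\alpha)\cap U_\beta=\emptyset$ for all $\beta\neq\alpha$ by Theorem~\ref{IJ=0 and G}(a); since $\O(J_\alpha)$ is a nonempty open set and $\bigcup_\beta U_\beta$ is dense, $\O(J_\alpha)$ must meet $U_\alpha$, so $\alpha\mapsto J_\alpha$ is injective and $|\mathcal{U}|\leqslant|D|$. Without the density normalization (i), your hoped-for conclusion that the dominator is ``pinned down'' to the cell $U_\alpha$ is false, so the gap is not merely expository.
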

\begin{proof}
	Suppose that $ \mathcal{U} $ is a family of mutually disjoint nonempty open sets. If $ \overline{\bigcup \mathcal{U}} \neq X $, then $ \mathcal{V} = \mathcal{U} \cup \left\{ X \setminus \overline{\bigcup \mathcal{U}} \right\} $ is a family of mutually open sets  which $ \overline{\bigcup \mathcal{V}} = X $, so without loss of generality we can assume that $ \overline{\bigcup \mathcal{U}} = X $. For each $ U \in \mathcal{U} $, there is some $ I_{_U} \in \AC $ such that $ \O(I_{_U}) = U $, by Lemma \ref{open in image}. Since $ U \neq \emptyset $ and $ \overline{U} \neq X $, it follows that $ I_{_U} \in \AC $, by Lemma \ref{order} and Corollary \ref{element of AG}. Now suppose that $ D $ is a dominating set, then for each $ U \in \mathcal{U} $, there is some ideal $ J_{_U} $ in $ D $ such that $ J_{_U} $ is adjacent to $ \sum_{U \neq V \in \mathcal{U}} I_{_V} $, now Theorem \ref{IJ=0 and G}, implies that $\O(J_{_U}) \cap \O\left(\O \left( \sum_{U \neq V \in \mathcal{U}} I_V \right)  \right) = \emptyset $, thus $ \O(J_{_U}) \cap \left( \bigcup_{U \neq V \in \mathcal{U}} U \right) = \emptyset $. Suppose that $ J_{_U} = J_{_{U'}} $, then $ \O(J_{_U}) = \O(J_{_{U'}}) $. If $ U \neq U' $, then 
	\begin{align*}
	\O(J_{_U}) \cap \bigcap \mathcal{U} & = \O(J_{_U}) \cap \bigg[ \Big( \bigcup_{U \neq V \in \mathcal{U}} V \Big) \cup \Big( \bigcup_{U' \neq V \in \mathcal{U}} V \Big) \bigg] \\
										& = \bigg[ \O(J_{_U}) \cap  \Big( \bigcup_{U \neq V \in \mathcal{U}} V \Big) \bigg] \cup \bigg[ \O(J_{_U}) \cap \Big( \bigcup_{U' \neq V \in \mathcal{U}} V \Big) \bigg] = \emptyset.
	\end{align*}  
	Thus $ \overline{\bigcup \mathcal{U}} \neq X $, which  contradicts our assumption. Hence $ U = U' $, so $ | \mathcal{U} | \leqslant | D | $, and consequently $ c(X) \leqslant \dt(\AGC) $.
	
	Now suppose that $ \mathcal{B} $ is a base for $ X $, without loss of generality we can assume that every element of $ \mathcal{B} $ is not empty. For each $ B \in \mathcal{B} $, there is some $ 0 \neq f_{_B} \in C(X) $ such that $ \emptyset \neq \Co(f_{B}) \subseteq B $. Clearly, we can choose $ f $ such that $ \overline{\Co(f_{_B})} \neq X $. Lemma \ref{order}, concludes that $ \O(\Ge{f_{_B}}) = \Co(f_{_B}) $, so $ \O(\Ge{f_{_B}}) \neq \emptyset $ and $ \overline{\O(\Ge{f_{_B}})} \neq X $, for each $ B \in \mathcal{B} $, thus $ \Ge{f_{_B}} \in \AC $, by Lemma \ref{order} and Corollary \ref{element of AG}. For each $ J \in \AC $, $ \overline{\O(I)} \neq X $, by Corollary \ref{element of AG}, so $ \left( X \setminus \O(I)\right)^\circ \neq \emptyset $, thus $ B \in \mathcal{B} $ exists such that $ B \subseteq \left(X \setminus \O(I)\right)^\circ $, hence $ \O(\Ge{f_{_B}}) \subseteq X \setminus \O(I) $, consequently , $ \O(\Ge{f_{_B}}) \cap \O(I) = \emptyset $, therefore Theorem \ref{IJ=0 and G}, implies that $ \Ge{f_{_B}} $ is adjacent to $ I $. Hence $ \{ \Ge{f_{_B}} : B \in \mathcal{B} \} $ is a dominating set, since $ \big| \{ \Ge{f_{_B}} : B \in \mathcal{B} \} \big| \leqslant | \mathcal{B} | $, it follows that $ \dt(\AGC) \leqslant w(X) $.
\end{proof}

\begin{Cor}
	If $ X $ is discrete, then $ \dt(\AGC) = |X| $.
\label{dominating and discrete}
\end{Cor}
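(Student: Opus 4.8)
The plan is to squeeze $\dt(\AGC)$ between the two topological invariants already bounded in Theorem \ref{cellularity < dominating}: I would show that when $X$ is discrete both $c(X)$ and $w(X)$ equal $|X|$, so that the chain $c(X) \leqslant \dt(\AGC) \leqslant w(X)$ collapses to $\dt(\AGC) = |X|$. Thus the whole corollary reduces to two elementary cardinality computations for discrete spaces, with no graph-theoretic work beyond invoking the theorem.

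First I would compute the cellularity. Since $X$ is discrete, each singleton $\{x\}$ is open, so $\{\, \{x\} : x \in X \,\}$ is a family of $|X|$ pairwise disjoint nonempty open sets, giving $c(X) \geqslant |X|$. Conversely, any family of pairwise disjoint nonempty open sets injects into $X$ by selecting a point from each member, so its cardinality is at most $|X|$; hence $c(X) = |X|$.

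Next I would compute the weight. The singletons form a base of $X$ of cardinality $|X|$, so immediately $w(X) \leqslant |X|$. For the reverse inequality the key observation is that every base $\mathcal{B}$ of a discrete space must contain each singleton: since $\{x\}$ is open and nonempty and its only nonempty subset is itself, expressing $\{x\}$ as a union of basic open sets contained in it forces $\{x\} \in \mathcal{B}$. Therefore $|\mathcal{B}| \geqslant |X|$ for every base, so $w(X) \geqslant |X|$, and thus $w(X) = |X|$.

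Combining the two computations with Theorem \ref{cellularity < dominating} gives $|X| = c(X) \leqslant \dt(\AGC) \leqslant w(X) = |X|$, whence $\dt(\AGC) = |X|$. I do not anticipate any genuine obstacle here; the only step needing a moment's care is the lower bound $w(X) \geqslant |X|$, namely the fact that no base of a discrete space can be strictly smaller than its set of singletons.
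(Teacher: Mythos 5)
Your proof is correct. The lower bound is handled exactly as in the paper: both arguments observe that $ c(X) = |X| $ for discrete $ X $ and invoke $ c(X) \leqslant \dt(\AGC) $ from Theorem \ref{cellularity < dominating}. Where you genuinely differ is the upper bound. The paper does not pass through the weight at all; it constructs an explicit dominating set $ \{ I_p : p \in X \} $, where $ \O(I_p) = \{ p \} $ is obtained from Lemma \ref{open in image}, and checks directly that every $ I \in \AC $ satisfies $ \O(I) \neq X $ and is therefore adjacent to some $ I_p $. Your route instead proves $ w(X) = |X| $ --- using the fact, which you correctly single out as the one point needing care, that every base of a discrete space must contain every singleton --- and then squeezes $ \dt(\AGC) $ between $ c(X) $ and $ w(X) $. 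Both are valid; yours is more economical in that it reuses both halves of Theorem \ref{cellularity < dominating} and requires no further graph-theoretic work, while the paper's explicit construction is essentially the specialization of that theorem's upper-bound argument to the base of singletons and has the minor advantage of exhibiting a concrete minimum dominating set.
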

\begin{proof}
	For each $ p \in X $, since $ \{ p \} $ is open, there is some $ I_p \in \AC $ such that $ \O(I_p) = \{ p \} $, by Lemma \ref{open in image}. For each $ I \in \AC $, $ \O(I) \neq X $, by Corollary \ref{element of AG}, thus $ p \in X \setminus \O(I) $ exists, hence $ \O(I) \cap \O(I_p) = \emptyset $, so $ I $ is adjacent to $ I_p $. This shows that $ \{ I_p : p \in X \} $ is a dominating set, this implies that $ \dt(\AGC) \leqslant | X | $. It is clear that $ | X | = c(X) $, so $ | X | \leqslant \dt(\AGC) $, by Theorem \ref{cellularity < dominating}. Consequently, $ \dt(\AGC) = |X| $.
\end{proof}

\begin{Thm}
	$ \dt(\AGC) $ is finite \ff $ | X | $ is finite. In this case, $ \dt(\AGC) = | X | $.
\end{Thm}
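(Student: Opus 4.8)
The plan is to prove the two directions of the equivalence separately and to read off the equality from the easy direction, relying almost entirely on Theorem \ref{cellularity < dominating} (specifically the lower bound $c(X) \leqslant \dt(\AGC)$) and on Corollary \ref{dominating and discrete}.

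For the implication that finiteness of $|X|$ forces $\dt(\AGC)$ to be finite, together with the identity $\dt(\AGC) = |X|$, I would first recall that $X$ is Tychonoff, hence $T_1$. A finite $T_1$ space has all singletons closed, so every subset is closed and therefore open; thus a finite $X$ is discrete. Corollary \ref{dominating and discrete} then applies verbatim and gives $\dt(\AGC) = |X|$, which is finite. This single observation simultaneously settles the reverse implication and the ``in this case'' clause, so no further work is needed for the equality.

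For the converse I would argue by contraposition: assuming $|X|$ infinite, I would show that $c(X)$ is infinite and then invoke $c(X) \leqslant \dt(\AGC)$. The key step is the standard fact that in a Hausdorff space any finite set of distinct points admits pairwise disjoint open neighbourhoods: given distinct $x_1,\dots,x_k$, choose for each pair $i\neq j$ disjoint open sets $U_{ij}\ni x_i$ and $U_{ji}\ni x_j$, and put $V_i=\bigcap_{j\neq i}U_{ij}$; each $V_i$ is open and nonempty, and $V_i\cap V_j\subseteq U_{ij}\cap U_{ji}=\emptyset$. Since $X$ is infinite, this can be carried out for every $k\in\N$, producing $k$ mutually disjoint nonempty open sets, so $c(X)\geqslant k$ for all $k$ and hence $c(X)$ is infinite. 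Theorem \ref{cellularity < dominating} then forces $\dt(\AGC)$ to be infinite as well.

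I expect the only genuine content here to be that separation argument yielding arbitrarily large disjoint open families; everything else is a direct citation of the preceding results. The one point I would state carefully is that it is precisely the failure of finiteness of $c(X)$ that drives the argument, so the lower bound in Theorem \ref{cellularity < dominating} suffices and there is no need either to analyse dominating sets of $\AGC$ by hand or to appeal to the weight $w(X)$.
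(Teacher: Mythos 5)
Your proposal is correct and follows essentially the same route as the paper: the paper also deduces the forward direction from $c(X)\leqslant \dt(\AGC)$ (Theorem \ref{cellularity < dominating}) after noting that an infinite $X$ has infinite cellularity, and the reverse direction plus the equality $\dt(\AGC)=|X|$ from the fact that a finite Tychonoff space is discrete together with Corollary \ref{dominating and discrete}. You merely spell out the two ``clearly'' steps (finite $T_1$ implies discrete; infinite Hausdorff implies infinite cellularity) that the paper leaves implicit.
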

\begin{proof}
	$ \Rightarrow $). Suppose that $ | X  |$ is infinite. Clearly $ c(X) $ is infinite, so $ \dt(\AGC) $ is infinite, by  Theorem \ref{cellularity < dominating}.
	
	$ \Leftarrow $). If $ | X | $ is finite, then $ X $ is discrete, so $ \dt(\AGC) = |X| $ is finite, by Corollary \ref{dominating and discrete}.
\end{proof}

\begin{Thm}
	$ \chi \AGC = \Cli \AGC = c(X) $, for each topological space $ X $.
\label{cellularity and clique}
\end{Thm}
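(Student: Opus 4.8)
The plan is to first reduce to a statement about the clique number alone. Since Proposition~\ref{diameter and clique=chi}(b) already gives $\chi(\AGC)=\Cli(\AGC)$, it suffices to prove $\Cli(\AGC)=c(X)$. I would build the whole argument on the dictionary supplied by Theorem~\ref{IJ=0 and G}(a) and Corollary~\ref{element of AG}(a): two vertices $I,J$ of $\AGC$ are adjacent exactly when $\O(I)\cap\O(J)=\emptyset$, while a nonzero ideal $I$ lies in $\AC$ exactly when $\O(I)$ is nonempty (Lemma~\ref{order}(c)) and not dense, i.e. $\overline{\O(I)}\neq X$. Thus a clique of $\AGC$ translates into a family of ideals whose $\O$-images are pairwise disjoint, and conversely.

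For the inequality $\Cli(\AGC)\leqslant c(X)$ I would take any clique $C\subseteq\AC$ and look at the family $\{\O(I):I\in C\}$. Each $\O(I)$ is nonempty, and for distinct $I,J\in C$ adjacency forces $\O(I)\cap\O(J)=\emptyset$; since both images are nonempty this also shows $\O(I)\neq\O(J)$, so $I\mapsto\O(I)$ is injective on $C$. Hence $\{\O(I):I\in C\}$ is a family of $|C|$ mutually disjoint nonempty open sets, giving $|C|\leqslant c(X)$, and taking the supremum over all cliques yields $\Cli(\AGC)\leqslant c(X)$.

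For the reverse inequality $c(X)\leqslant\Cli(\AGC)$ I would start from an arbitrary family $\mathcal{U}=\{U_\alpha\}$ of mutually disjoint nonempty open sets and produce a clique of the same cardinality. Using Lemma~\ref{open in image}, choose for each $\alpha$ an ideal $I_\alpha$ with $\O(I_\alpha)=U_\alpha$; since the $U_\alpha$ are pairwise disjoint, the $I_\alpha$ are pairwise adjacent by Theorem~\ref{IJ=0 and G}(a). The one thing that must be checked is that every $I_\alpha$ genuinely lies in $\AC$, that is, that $U_\alpha$ is neither empty nor dense. Nonemptiness is given; for non-density I would use that $X$ is Tychonoff with $|X|>1$, hence Hausdorff, so $c(X)\geqslant 2$ and we may restrict the supremum defining $c(X)$ to families with $|\mathcal{U}|\geqslant 2$. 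Then each $U_\alpha$ is disjoint from some other nonempty open $U_\beta$; a dense open set meets every nonempty open set, so $U_\alpha$ cannot be dense, i.e. $\overline{U_\alpha}\neq X$ and $I_\alpha\in\AC$ by Corollary~\ref{element of AG}(a). The resulting $\{I_\alpha\}$ is then a clique of size $|\mathcal{U}|$, whence $|\mathcal{U}|\leqslant\Cli(\AGC)$, and the supremum gives $c(X)\leqslant\Cli(\AGC)$.

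The main obstacle I anticipate is precisely the non-density check in the second inequality: Lemma~\ref{open in image} only guarantees an ideal with the prescribed $\O$-image, and such an ideal fails to be a vertex of $\AGC$ exactly when its image is dense. The clean way around this is the observation that a family witnessing $c(X)$ may be taken with at least two members (using Hausdorffness of Tychonoff spaces), after which disjointness automatically rules out density of any member. With that in place the two inequalities combine to $\Cli(\AGC)=c(X)$, and Proposition~\ref{diameter and clique=chi}(b) upgrades this to $\chi(\AGC)=\Cli(\AGC)=c(X)$.
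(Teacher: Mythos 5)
Your argument is correct and is essentially the paper's own proof: the paper simply cites Proposition~\ref{diameter and clique=chi}, Lemma~\ref{open in image} and Theorem~\ref{IJ=0 and G} and leaves the two inequalities to the reader, which is exactly the dictionary you spell out. Your explicit handling of the non-density issue (using that $c(X)\geqslant 2$ for a Hausdorff space with more than one point, so each member of a witnessing family is disjoint from another nonempty open set and hence not dense) is a detail the paper glosses over, and it is the right way to close that gap.
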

\begin{proof}
	It is an immediate consequence of Proposition \ref{diameter and clique=chi}, Lemma \ref{open in image} and Theorem \ref{IJ=0 and G}.
\end{proof}

\begin{Def}
	Suppose that $ (X,\tau) $ is a topological space, set $ \tau^* = \{ G \in \tau : G \neq \emptyset \text{ and } ( X \setminus G )^\circ \neq \emptyset \} $. We define a graph with vertices of elements of $ \tau^* $, where two distinct vertices $ G $ and $ H $ are adjacent \ff $ G \cap H = \emptyset $. We call this graph disjoint open set graph and denote by $ \DGX $.
\end{Def}

\begin{Lem}
	Suppose that $ G $ and $ G' $ are two graphs. If $ \varphi $ is a map from the vertices of $ G $ onto vertices $ G' $ such that $ \{u,v\} \in E(G) $ \ff $ \{\varphi(u),\varphi(v)\} \in E(G') $ and $ \{ u,v \} \in E(G) $ implies that $ \varphi(u) \neq \varphi(v) $, then
	\begin{itemize}
		\item[(a)] $ \Di(G') = \Di(G) $.
		\item[(b)] $ \Ra(G') = \Ra(G) $.
		\item[(c)] $ \Gi(G') \leqslant \Gi(G) $.
		\item[(d)] $ \dt(G') \leqslant \dt(G) $.
		\item[(e)] $ \Cli(G') = \Cli(G) $.
		\item[(f)] $ \chi(G') = \chi(G) $.
		\item[(g)] $ G $ is complemented \ff $ G' $ is complemented.
	\end{itemize}
\end{Lem}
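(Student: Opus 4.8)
The plan is to exploit two features of $\varphi$ repeatedly. First, the biconditional $\{u,v\}\in E(G)\Leftrightarrow\{\varphi(u),\varphi(v)\}\in E(G')$ says that $\varphi$ both \emph{preserves} and \emph{reflects} adjacency; second, since $\varphi$ is onto, every vertex, edge and path of $G'$ can be lifted by choosing preimages. I would first record the basic transfer of walks. A walk $w_0,\dots,w_n$ in $G$ pushes forward to a walk $\varphi(w_0),\dots,\varphi(w_n)$ of the same length in $G'$, because the forward implication together with the hypothesis $\{u,v\}\in E(G)\Rightarrow\varphi(u)\neq\varphi(v)$ keeps consecutive images adjacent and distinct. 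Conversely a walk $x_0,\dots,x_n$ in $G'$ lifts to a walk of the same length in $G$ with prescribed endpoints: fix preimages of the two endpoints, choose arbitrary preimages in between, and use the reverse implication to recover the edges. From this I extract the metric identity $d_{G'}(\varphi(u),\varphi(v))=d_G(u,v)$ whenever $\varphi(u)\neq\varphi(v)$, and $d_{G'}(\varphi(u),\varphi(v))\leqslant d_G(u,v)$ in all cases.

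Parts (d)--(g) I would then dispatch by the ``representative'' trick, where the non-injectivity is harmless. For (e), a clique of $G$ maps injectively (by the second hypothesis) to a clique of $G'$, while a clique of $G'$ lifts, one preimage per vertex, to a clique of $G$ of the same size, giving $\Cli(G')=\Cli(G)$. For (f), a proper colouring of $G'$ pulls back along $\varphi$ to a proper colouring of $G$ (adjacent vertices have adjacent, hence differently coloured, images), so $\chi(G)\leqslant\chi(G')$; conversely, choosing one preimage $u_x$ per vertex $x$ and colouring $x$ by the $G$-colour of $u_x$ is proper, since $\{x,y\}\in E(G')$ forces $\{u_x,u_y\}\in E(G)$, giving $\chi(G')\leqslant\chi(G)$. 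Part (d) is analogous: the image of a dominating set $D$ of $G$ dominates $G'$, because an undominated $x\notin\varphi(D)$ would have a preimage lying outside $D$, dominated by some $w\in D$ whose image then dominates $x$; hence $\dt(G')\leqslant\dt(G)$. For (g) I would transfer orthogonality in both directions: if $u\perp v$ in $G$ then $\varphi(u)\perp\varphi(v)$, for any common neighbour of the images would lift to a common neighbour of $u,v$; and if $x\perp y$ in $G'$ then $u\perp v$ for chosen preimages, since a common neighbour of $u,v$ would push forward to a common neighbour of $x,y$.

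Finally the metric statements (a)--(c) rest on the distance identity and surjectivity: every pair of $G'$ lifts to a pair of $G$, giving $\Di(G')\leqslant\Di(G)$ and (via eccentricities) the radius inequalities, while pairs of $G$ with distinct images give the reverse estimates, and a shortest cycle of $G$ pushes forward to a closed walk of the same length in $G'$. The main obstacle, and the only point needing genuine care, is precisely the non-injectivity of $\varphi$: a pair $u\neq v$ with $\varphi(u)=\varphi(v)$ is invisible in $G'$ yet a priori far apart in $G$, and a cycle of $G$ may degenerate under $\varphi$. The structural fact I would isolate to control this is that each fibre of $\varphi$ consists of pairwise non-adjacent twins --- reflecting adjacency shows that $\varphi(u)=\varphi(v)$ forces $u$ and $v$ to have exactly the same neighbours and to be non-adjacent --- so two collapsed vertices sit at distance $2$ through any common neighbour. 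Feeding this back into the eccentricity and girth computations is exactly where the estimates must be pinned down, and I expect this fibre analysis to be the delicate step.
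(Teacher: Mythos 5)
Your route is the same as the paper's: push paths forward and lift them back to obtain $d_{G'}(\varphi(u),\varphi(v))=d_G(u,v)$, and handle (d)--(g) by taking images (which the edge condition forces to be injective on cliques) and by choosing one representative per fibre. Your treatment of (d)--(g) is complete, correct, and coincides with what the paper does.

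The step you defer --- ``feeding the fibre analysis back into the eccentricity and girth computations'' --- is, however, not merely delicate: it cannot be carried out, because parts (a), (b) and (c) are false as stated. Take $G=C_4$ with vertices $a,b,c,d$ in cyclic order, $G'=K_2$ on $\{x,y\}$, and $\varphi(a)=\varphi(c)=x$, $\varphi(b)=\varphi(d)=y$. All hypotheses hold: the four edges map to the edge $\{x,y\}$, the two diagonals map to singletons, which are not edges, so the biconditional is satisfied, and $\varphi$ is onto. Yet $\Di(G)=\Ra(G)=2$ while $\Di(G')=\Ra(G')=1$, and $\Gi(G)=4$ while $G'$ has no cycle at all, so $\Gi(G')\not\leqslant\Gi(G)$. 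The obstruction is exactly the one you isolated: a pair $u\neq v$ with $\varphi(u)=\varphi(v)$ consists of non-adjacent twins sitting at distance $2$, and a cycle can fold onto a single edge, and $G'$ sees none of this. The paper's own proof of (a)--(c) is the single assertion that paths correspond under $\varphi$, from which the distance identity and then (a)--(c) are declared clear; it therefore commits silently the oversight you were guarding against, while your proposal names the hole but does not (and cannot) close it. The statements become true if one adds the hypothesis that $\varphi$ is injective, or that the extremal distances and shortest cycles of $G$ are realized by vertices with pairwise distinct images; the latter is what actually holds in the intended application $\O\colon\AGC\to\DGX$ (for $|X|=2$ the map is a bijection of two copies of $K_2$; for $|X|>2$ two distinct ideals with the same image under $\O$ are non-adjacent and share a common neighbour, and every pair realizing $\Di=3$ or $\Gi=3$ has distinct images), so the theorem the paper deduces from this lemma survives even though the lemma itself does not.
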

\begin{proof}
	Suppose that $u,v \in V(G)$. $u = u_0 - u_1 - u_2 - \cdots - u_n = v $ is a path \ff $\varphi(u) = \varphi(u_0) - \varphi(u_1) - \cdots - \varphi(u_n) = \varphi(v) $ is  a path, so $ d(\varphi(u),\varphi(v)) = d(u,v) $. By this fact the proof of the statements (a),(b) and (c) are clear.
		
	(d). Readily, we can see that if $ A \subseteq V(G) $ is a dominating set, then $ \varphi(A) \subseteq V(G') $ is a dominating set, so $ \dt(G') \leqslant \dt(G) $. 
	
	(e). Similar to the proof part (d) we can show that $ \Cli(G') \leqslant \Cli(G) $. Now suppose that $ S \subseteq V(G') $ is a clique set. Put $ u_v \in \varphi^{-1}(u) $, clearly $ \{ u_v \}_{v \in S} $ is a clique set and $ \left| \{ u_v \}_{v \in S} \right| = | S | $, hence $ \Cli(G) \leqslant \Cli(G') $ and consequently the equality holds. 
	
	(f). Suppose that for some cardinal number $ x $, the map $ f : G' \rightarrow x $ is a coloring, (i.e., if $ \{u,v\} \in E(G)  $, then $ f(u) \neq f(v) $). Then, clearly, $ f \circ \varphi : G \rightarrow x $ is a coloring, so $ \chi(G') \leqslant \chi(G) $. Now suppose that $ g : G \rightarrow y $ is a coloring, for some cardinal number $ y $. For each $ v \in V(G) $, choose $ u_v \in V(G)  $ such that $ \varphi(u_v) = v $. Let $ \bar{g} : G' \rightarrow y  $ be given by $ \bar{g} (v) = g(u_v) $. Evidently, $ \bar{g} $ is a coloring, thus $ \chi(G') \leqslant \chi(G) $, and consequently $ \chi(G) = \chi(G') $.
	
	(g). It is easy.
\end{proof}

The following Theorem is an immediate consequence of the above Lemma, Proposition \ref{diameter and clique=chi}, Theorems \ref{radius}, \ref{girth}, and \ref{cellularity and clique} and this fact that the annihilating-ideal graph of a ring is complemented.

\begin{Thm}
	Let $ X $ be a topological space. The following statements hold.
	\begin{itemize}
		\item[(a)] \[  \Di(\DGX) =  \begin{cases}
								1 & \text{ if } |X| = 2 \\
								3 & \text{ if } |X| > 2 
									\end{cases} \]
		\item[(b)] $ |X| = 2 $ \ff $ \DGX $  is a star graph.
		\item[(c)] \[  \Ra(\DGX) = \begin{cases}
							1 & \text{ if } |X| = 2 \\
							2 & \text{ if } |X| > 2 \text{ and } X \text{ has an islated point.} \\
							3 & \text{ if } |X| > 2 \text{ and } X \text{ has no any isloated point.}
								\end{cases} \]
		\item[(d)] If $ |X| > 2 $, then $ \Gi(\DGX) = 3 $.
		\item[(e)] $ \chi (\DGX) = \Cli \DGX = c(X) $.
		\item[(g)] $ \DGX $ is complemented.
	\end{itemize}
\end{Thm}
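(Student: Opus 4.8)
The plan is to realize the operator $\O$ as a graph map of the kind treated in the preceding Lemma, running from $\AGC$ to $\DGX$, and then simply to transport each invariant of $\AGC$ that was computed earlier. I would set $\varphi = \O$ on the vertex set $\AC$ of $\AGC$, and first check that $\varphi$ lands in and surjects onto the vertex set $\tau^*$ of $\DGX$. For $I \in \AC$ the set $\O(I)$ is open, nonempty (as $I \neq \{0\}$, by Lemma \ref{order}), and satisfies $(X \setminus \O(I))^\circ = X \setminus \overline{\O(I)} \neq \emptyset$ because $\overline{\O(I)} \neq X$ by Corollary \ref{element of AG}; hence $\O(I) \in \tau^*$. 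Conversely, given $G \in \tau^*$, Lemma \ref{open in image} supplies an ideal $I$ with $\O(I) = G$, and since $G \neq \emptyset$ and $\overline{G} \neq X$, Corollary \ref{element of AG} gives $I \in \AC$. Thus $\varphi$ maps $\AC$ onto $\tau^*$.

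Next I would verify the two edge hypotheses of the Lemma. By Theorem \ref{IJ=0 and G}, distinct $I,J \in \AC$ are adjacent in $\AGC$ exactly when $\O(I) \cap \O(J) = \emptyset$; as $\O(I)$ and $\O(J)$ are nonempty, this disjointness already forces $\O(I) \neq \O(J)$, so it coincides with adjacency of $\O(I)$ and $\O(J)$ in $\DGX$. The same remark shows that adjacent vertices cannot collapse under $\varphi$: if $\O(I) = \O(J)$ then $\O(I) \cap \O(J) = \O(I) \neq \emptyset$, so $I$ and $J$ are non-adjacent. These are precisely the hypotheses of the Lemma, which then delivers $\Di(\DGX) = \Di(\AGC)$, $\Ra(\DGX) = \Ra(\AGC)$, $\Gi(\DGX) \leqslant \Gi(\AGC)$, $\Cli(\DGX) = \Cli(\AGC)$, $\chi(\DGX) = \chi(\AGC)$, and that $\DGX$ is complemented iff $\AGC$ is.

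With the transfer in hand the assertions follow by quotation. Part (a) comes from $\Di(\AGC) = 1$ when $|X| = 2$ and $\Di(\AGC) = 3$ when $|X| > 2$, the contents of the first Proposition and Proposition \ref{diameter and clique=chi}. Part (c) is Theorem \ref{radius}, part (e) is Theorem \ref{cellularity and clique}, and part (g) follows because $\AGC$, being an annihilating-ideal graph, is complemented. For part (d), the Lemma only yields $\Gi(\DGX) \leqslant \Gi(\AGC) = 3$ (Theorem \ref{girth}); but a simple graph that possesses a cycle has girth at least $3$, and girth at most $3$ guarantees such a cycle, whence $\Gi(\DGX) = 3$. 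For part (b), when $|X| = 2$ parts (a) and (e) give $\Di(\DGX) = 1$ and $\Cli(\DGX) = c(X) = 2$, so $\DGX$ is complete on exactly two vertices, i.e. $K_2$, a star; conversely a star has diameter at most $2$, so part (a) excludes $|X| > 2$.

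I expect the only point needing care to be the bookkeeping that matches $\tau^*$ with the $\O$-image of $\AC$ and rules out the collapse of edges, so that the Lemma genuinely applies; once that is pinned down, everything else is a direct appeal to the earlier invariant computations and to the known complementedness of annihilating-ideal graphs.
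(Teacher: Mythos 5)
Your proposal is correct and follows exactly the route the paper takes: the paper's proof is the one-line remark that the theorem is an immediate consequence of the transfer Lemma (applied to $\O$), Proposition \ref{diameter and clique=chi}, Theorems \ref{radius}, \ref{girth}, \ref{cellularity and clique}, and the complementedness of annihilating-ideal graphs. You merely make explicit what the paper leaves implicit — the verification that $\O$ satisfies the Lemma's hypotheses, and the small extra arguments needed for (b) (star-ness is not among the transferred invariants) and (d) (the Lemma gives only $\Gi(\DGX)\leqslant 3$) — which is entirely in the spirit of the paper's proof.
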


Finally, we note that it is clear that $ \chi(\DGX) = c(X) $.

\bibliographystyle{plain}
\bibliography{Ref}

\end{document}